\newtheorem{theorem}{Theorem}[section]
\newtheorem*{theorem*}{Theorem}
\newtheorem{lemma}[theorem]{Lemma}
\newtheorem{prop}[theorem]{Proposition}
\theoremstyle{definition}
\newtheorem{defn}[theorem]{Definition}
\newtheorem{rem}[theorem]{Remark}
\author[M. Bhattacharjee]{Monojit Bhattacharjee}
\author[R. Gupta]{Rajeev Gupta}
\author[V. Venugopal]{Vidhya Venugopal}
\address[M. Bhattacharjee]{Department of Mathematics \\
Birla Institute of Technology and Science K K Birla Goa Campus, India
}
\address[R. Gupta]{School of Mathematics and Computer Science\\
Indian Institute of Technology Goa, India}
\address[V. Venugopal]{Department of Mathematics \\
Birla Institute of Technology and Science K K Birla Goa Campus, India
 }
\keywords{left-inverse commuting tuple, toral $2$-isometry, Wold-type decomposition, Dirichlet-type spaces, wandering subspace property}
\subjclass[2010]{Primary 46E20, 47B32, 47B38,  Secondary 47A50, 31C25}
\begin{document}
\title[Wold-type Decomposition and Analytic Models for $2$-isometries]{Multivariable Wold-Type Decomposition and Analytic Models for a class of left-inverse commuting pairs}

\begin{abstract}
This work establishes a multivariable Wold-type decomposition for left-inverse commuting $n$-tuples of bounded operators, built on the hypothesis that each component admits a Wold-type decomposition.
 For pairs of operators, we obtain a complete analytic model: every left-inverse commuting analytic toral $2$-isometric pair is unitarily equivalent to the pair of multiplication operator by co-ordinate functions $(M_{z_1}, M_{z_2})$ acting on some $\mathcal{E}$-valued  Dirichlet-type space $\mathcal D_{\mathcal E}(\mu_1, \mu_2)$ associated with two finite positive operator-valued Borel measures $\mu_1$ and $\mu_2$ on the unit circle.
An explicit functional model is further derived for the non-analytic case. 
\end{abstract}

\maketitle

\section{Introduction}

    The classical Wold decomposition theorem fundamentally characterizes isometries on any Hilbert space by decomposing them into unitary and shift components, see \cite{NF}. Initially, von Neumann introduced it in foundational article on decomposition of isometry, see \cite{von Neumann}. This was later discovered by Herman Wold in \cite{Wold} for stationary processes. The importance of it is resonated in several branches, namely, in dilation theory, invariant subspace theory in analytic spaces, prediction theory and several others.  Due to the central role of Wold decomposition for single isometry, numerous authors have sought Wold-type decompositions in multivariable settings, that is, for commuting tuples $(V_1,\ldots, V_n)$ of bounded operators, see \cite{Popovici-1998, Popovici, BDF-2006,  Burdak-2007, JS, AJS, BL-2022, MM-2023, NZ}. 
    The example discussed in \cite[Example 6.8]{BEKS} and the remark given in \cite[Remark 2.5]{SSS} are highlighting the indispensability of some assumptions on the commuting $n$-tuples ($n > 1$) to expect the wandering subspace property.  
    While addressing the case of $n=2,$ S{\l}oci{\'n}ski (1980) (see \cite[Theorems 3, 4, and 5]{Slocinski}) produced a Wold-type decomposition for some classes of  pair of commuting isometries e.g. doubly commuting isometries etc. 
    In the same article, he also obtained necessary and sufficient conditions for the existence of a Wold-type decomposition in the summand of the form unitary-unitary, unitary-shift, shift-unitary and shift-shift. In \cite{Popovici}, Popovici, generalized this result to arbitrary pair of commuting isometries by replacing the shift-shift part with a weakly bi-shift component.
    Burdak and coauthors examined \(n\)-tuples of commuting isometries with finite-dimensional wandering spaces and described canonical decompositions and classification results in that finite-multiplicity regime, see  \cite{Burdak2013}.
    In \cite{JS}, Jaydeb provides an analogous decomposition for \(n\)-tuples of doubly commuting isometries
\((V_1,\dots,V_n)\). The decomposition has in general \(2^n\) pieces \(\mathcal{H}_\alpha\) \((\alpha\subset\{1,\dots,n\})\),
which are reducing for all \(V_i\) \((i=1,\dots,n)\), and such that
$V_i\big|_{\mathcal{H}_\alpha}\ \text{is unitary if } i\in \alpha, \text{ and }
V_i\big|_{\mathcal{H}_\alpha}\ \text{is a unilateral shift if } i\notin \alpha.$

In this article, we extend this result to left-inverse commuting (see Definition \ref{defn left-inverse comm tuple}) $n$-tuple $\boldsymbol{T}=(T_1,\ldots, T_n)$ of bounded linear operators defined on a Hilbert space $\mathcal{H}$, such that for each $1\leq i \leq n,$ $T_i$ satisfies the Wold-type decomposition. 
For the case of $n=2,$ in Theorem \ref{Model Theorem - Rank 1}, we find that if $\boldsymbol{T}$ is further cyclic analytic toral $2$-isometry (see \eqref{Toral 2-isometry} for the definition) then $\boldsymbol T$ is unitarily equivalent to the pair $(M_{z_1}, M_{z_2})$ on the function space $\mathcal D(\mu_1,\mu_2)$ for some finite positive Borel measures $\mu_1,\mu_2$ on the unit circle $\mathbb T,$ where $\mathcal D(\mu_1, \mu_2)$ are as defined in \cite[Definition 1.2]{SSS}.
  
  In \cite[Theorem 2.4]{SSS}, the authors characterize the class of pairs $(T_1,T_2)$ of cyclic analytic toral $2$-isometries with the assumption that $\ker T_1^*\cap \ker T_2^*$ is a certain kind of wandering subspace (see \cite[Definition 1.5]{SSS}) for the pair $(T_1,T_2)$. Theorem \ref{Model Theorem - Rank 1} is  an improvement over \cite[Theorem 2.4]{SSS} as  it finds the exact class of pairs $(T_1,T_2)$ which satisfy the hypothesis of \cite[Theorem 2.4]{SSS}.
In Section \ref{Dirichlet space - vector valued}, we define vector valued Dirichlet-type space $\mathcal D_{\mathcal E}(\mu_1,\mu_2).$ 
This turns out to be a reproducing kernel Hilbert space in which the set of polynomials is dense and the pair $(M_{z_1}, M_{z_2})$ is a toral $2$-isometry. 
In Theorem \ref{Model Theorem - Higher Rank}, we obtain that $\mathcal D_\mathcal E(\mu_1, \mu_2)$ models the class of left-inverse commuting pair of analytic toral $2$-isometries. In Theorem \ref{Wold-type decomposition for pair- vector valued}, the condition of analyticity is also dropped and an explicit functional model is given for a pair of left-inverse commuting toral $2$-isometries. 

    In what follows, $\mathcal{H}$ shall always denote a separable complex Hilbert space and $\mathcal{L}(\mathcal{H})$ shall denote the collection of all bounded linear operators defined on $\mathcal{H}$. 
     For any left-invertible operator $T,$ let $T'$ denote the Cauchy dual operator and be defined as $T':=T(T^*T)^{-1}$. We use $\mathcal{H}_{\infty}(T)$ to denote $\bigcap_{m \geq 1}T^m\mathcal{H}$ and $\mathcal{E}_T$ will denote the subspace $ker T^*=\mathcal{H}\ominus T\mathcal{H}.$ 
     Clearly, we have $\mathcal{E}_T=\mathcal{E}_{T'}.$ 
    An operator $T\in\mathcal L(H)$ is called \textit{analytic} if $\mathcal H_\infty(T)$ is trivial. 
    For any $T\in\mathcal L(H)$ and a subset $S$ of $H,$ the closed subspace $\bigvee_{m \in \mathbb Z_+}T^m(S)$ shall be denoted by $\mathcal W_T(S).$ Here $\mathbb Z_+$ denotes the set of all non-negative integers.
   
\section{von Neumann Wold-type decomposition for left invertible operators}

    The primary goal of this section is to establish a von Neumann Wold-type decomposition for a class of left invertible operators $(T_1,\ldots, T_n)$ defined on a Hilbert space $\mathcal{H}.$ Assuming that the operator $T_i,$ $1\leq i \leq n,$ has a Wold-type decomposition separately, we demonstrate that the tuple $(T_1,\ldots, T_n)$ jointly possesses the same property. 
    We shall first deal with the case when $n=2$ and use method of induction to prove the result in general. 
    Based on the seminal paper by S. Shimorin \cite{Shimorin}, we revisit the following definition for an arbitrary operator $T$.
\begin{defn}
    A left invertible operator $T$  on a Hilbert space $\mathcal{H}$ admits \textit{Wold-type decomposition} if $\mathcal{H}$ can be represented as $\mathcal{H}=\mathcal{H}_{\infty}(T) \oplus \mathcal W_T(\mathcal{E}_T)$ such that $\mathcal H_\infty (T)$ is a $T$-reducing subspace and $T\vert_{\mathcal{H}_\infty(T)}$ is unitary.  
\end{defn}
The following results are due to S. Shimorin, see \cite[Proposition 2.7]{Shimorin}.
    
\begin{prop}\label{orthogonal complement of hyper-range}
    For any left invertible operator $T\in\mathcal L(H)$, the following are true:
    \begin{enumerate}
        \item $\mathcal{H}_{\infty}(T')^\perp = \mathcal W_T(\mathcal E_T);$
    \item $\mathcal{H}_{\infty}(T)^\perp = \mathcal W_{T'}(\mathcal E_T).$
     \end{enumerate}
\end{prop}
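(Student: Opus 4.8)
The plan is to establish part (1) directly and then obtain part (2) from it via the involutive behaviour of the Cauchy dual. First I would record the standard consequences of left invertibility: since $T$ is bounded below, $T\mathcal{H}$ is closed and $\mathcal{H}=\mathcal{E}_T\oplus T\mathcal{H}$; moreover one has the identities $T^*T'=I$ and $(T')^*T=I$, the operator $T'T^*=T(T^*T)^{-1}T^*$ is precisely the orthogonal projection onto $T\mathcal{H}$, and $T'\mathcal{H}=T\mathcal{H}$, $\mathcal{E}_{T'}=\mathcal{E}_T$, $(T')'=T$. These are routine algebraic verifications from the definition $T'=T(T^*T)^{-1}$.

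To prove (1) it suffices to show $\mathcal{H}_\infty(T')=\mathcal W_T(\mathcal E_T)^\perp$. Passing to adjoints on the defining spanning set, the condition $x\perp T^m e$ for all $m\ge 0$ and $e\in\mathcal E_T$ is equivalent to $(T^*)^m x\in\mathcal E_T^\perp=T\mathcal{H}$ for all $m\ge 0$, so that
\[
\mathcal W_T(\mathcal E_T)^\perp=\{\,x\in\mathcal{H}:(T^*)^m x\in T\mathcal{H}\ \text{for all}\ m\ge 0\,\}.
\]
The task then reduces to identifying this set with $\bigcap_{m\ge 1}(T')^m\mathcal{H}$.

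For the inclusion $\mathcal{H}_\infty(T')\subseteq\mathcal W_T(\mathcal E_T)^\perp$, I would take $x=(T')^m y\in(T')^m\mathcal{H}$ and use that $T^*T'=I$ telescopes: $(T^*)^k x=(T')^{m-k}y\in T'\mathcal{H}=T\mathcal{H}$ whenever $k<m$. Letting $m$ grow yields $(T^*)^k x\in T\mathcal{H}$ for every $k$, so this direction is essentially a cancellation and should be immediate. The converse inclusion carries the real content. Given $x$ with $(T^*)^m x\in T\mathcal{H}$ for all $m$, the relation $x\in T\mathcal{H}$ forces $T'T^* x=x$, since $T'T^*$ is the projection onto $T\mathcal{H}$; hence $x=T'(T^*x)$. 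The key point is that $T^*x$ again satisfies the hypothesis, because $(T^*)^m(T^*x)=(T^*)^{m+1}x\in T\mathcal{H}$, so iterating gives $x=(T')^m\bigl((T^*)^m x\bigr)$ for every $m$, whence $x\in\bigcap_{m\ge 1}(T')^m\mathcal{H}=\mathcal{H}_\infty(T')$. I expect the main obstacle to be exactly this induction: ensuring that the back-substituted preimages are consistent along the tower and that the range condition propagates at each stage.

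Finally, part (2) follows by applying part (1) with $T'$ in place of $T$. Using $(T')'=T$ together with $\mathcal E_{T'}=\mathcal E_T$, the identity $\mathcal{H}_\infty\bigl((T')'\bigr)^\perp=\mathcal W_{T'}(\mathcal E_{T'})$ reads precisely as $\mathcal{H}_\infty(T)^\perp=\mathcal W_{T'}(\mathcal E_T)$, which is the desired statement.
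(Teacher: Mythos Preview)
Your argument is correct. The identities you record ($T^*T'=I$, $T'T^*$ being the orthogonal projection onto $T\mathcal H$, $T'\mathcal H=T\mathcal H$, $\mathcal E_{T'}=\mathcal E_T$, $(T')'=T$) are routine, the characterization $\mathcal W_T(\mathcal E_T)^\perp=\{x:(T^*)^m x\in T\mathcal H\text{ for all }m\ge 0\}$ is immediate, and both inclusions go through as you describe; the inductive step $x=(T')^m\bigl((T^*)^m x\bigr)$ is exactly the right engine for the nontrivial direction, and the deduction of (2) from (1) via $(T')'=T$ is clean.

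As for comparison: the paper does not supply its own proof of this proposition---it simply attributes the result to Shimorin and cites \cite[Proposition~2.7]{Shimorin}. What you have written is essentially the standard argument one finds there, so your proposal is not a different route but rather a self-contained rendering of the cited proof. The only cosmetic point worth tightening is that in the easy inclusion you write $x=(T')^m y$ as though a single $y$ works for all $m$; of course the $y_m$ may vary with $m$, but since for each fixed $k$ you only need one $m>k$, the argument is unaffected.
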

    Proposition \ref{orthogonal complement of hyper-range} leads us to the following significant conclusions, which will help us to prove one of our main results, that is  Theorem \ref{wold-type decomposition for pair}.
\begin{theorem}\cite[Corollary 2.9]{Shimorin}\label{thm_2}
    The operator $T$ admits Wold-type decomposition if and only if $T'$ admits it. In this case $\mathcal{H}_{\infty}(T)=\mathcal{H}_{\infty}(T')$ and $\mathcal W_T(\mathcal{E})=\mathcal W_{T'}(\mathcal{E}).$
\end{theorem}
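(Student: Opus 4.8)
The plan is to reduce everything to Proposition \ref{orthogonal complement of hyper-range} together with the fact that the Cauchy dual is an involution. First I would record that $(T')'=T$: since $(T')^*=(T^*T)^{-1}T^*$, one computes $(T')^*T'=(T^*T)^{-1}T^*T(T^*T)^{-1}=(T^*T)^{-1}$, whence
\[
(T')'=T'\bigl((T')^*T'\bigr)^{-1}=T(T^*T)^{-1}(T^*T)=T.
\]
Combined with the already-noted identity $\mathcal E_{T'}=\mathcal E_T$, this yields a genuine symmetry between $T$ and $T'$, so it suffices to prove one implication and the associated subspace identities; the reverse implication then follows verbatim with the roles of $T$ and $T'$ interchanged.

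The key structural observation is an orthogonal decomposition valid for \emph{every} left invertible $T$, with no Wold hypothesis. Since $\mathcal W_T(\mathcal E_T)$ is closed by definition and $\mathcal H_\infty(T')=\bigcap_{m\ge 1}(T')^m\mathcal H$ is an intersection of closed subspaces (each $(T')^m\mathcal H$ being closed as $T'$ is bounded below), Proposition \ref{orthogonal complement of hyper-range}(1) gives $\mathcal W_T(\mathcal E_T)^\perp=\bigl(\mathcal H_\infty(T')^\perp\bigr)^\perp=\mathcal H_\infty(T')$, that is, $\mathcal H=\mathcal H_\infty(T')\oplus\mathcal W_T(\mathcal E_T)$. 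Consequently the orthogonal direct sum required by the definition, $\mathcal H=\mathcal H_\infty(T)\oplus\mathcal W_T(\mathcal E_T)$, can hold if and only if $\mathcal H_\infty(T)=\mathcal W_T(\mathcal E_T)^\perp=\mathcal H_\infty(T')$; this already delivers the first asserted equality. The second, $\mathcal W_T(\mathcal E)=\mathcal W_{T'}(\mathcal E)$, then follows by comparing parts (1) and (2) of Proposition \ref{orthogonal complement of hyper-range}, since both sides equal $\mathcal H_\infty(T)^\perp=\mathcal H_\infty(T')^\perp$ once the hyperranges coincide.

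It remains to transfer the two spectral conditions from $T$ to $T'$ on the common subspace $\mathcal N:=\mathcal H_\infty(T)=\mathcal H_\infty(T')$. If $\mathcal N$ reduces $T$ and $T|_{\mathcal N}$ is unitary, then $T^*T$ acts as the identity on $\mathcal N$; since $\mathcal N$ reduces the invertible operator $T^*T$, it also reduces $(T^*T)^{-1}$, which therefore restricts to the identity on $\mathcal N$ as well. Hence $T'=T(T^*T)^{-1}$ leaves $\mathcal N$ reducing with $T'|_{\mathcal N}=T|_{\mathcal N}$, so $T'|_{\mathcal N}$ is unitary. Together with the decomposition $\mathcal H=\mathcal H_\infty(T')\oplus\mathcal W_{T'}(\mathcal E_{T'})$, obtained by applying the previous paragraph to $T'$ and using $\mathcal E_{T'}=\mathcal E_T$, this shows $T'$ admits a Wold-type decomposition, completing the forward implication.

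I expect the main obstacle to be bookkeeping rather than conceptual: the only place demanding care is the reducing/unitary transfer, where one must verify that a subspace reducing $T$ on which $T$ is unitary is genuinely reducing for $T'$ and that $(T^*T)^{-1}$ restricts correctly to it. Everything else is a formal consequence of Proposition \ref{orthogonal complement of hyper-range} and the involution $(T')'=T$.
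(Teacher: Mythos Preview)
Your proof is correct. The paper does not supply its own argument for this theorem---it simply quotes Shimorin's Corollary 2.9 and indicates that it follows from Proposition \ref{orthogonal complement of hyper-range}---and your derivation does exactly that: you use both parts of Proposition \ref{orthogonal complement of hyper-range} together with the involution $(T')'=T$ and the elementary transfer of the reducing/unitary condition via $T'|_{\mathcal N}=T|_{\mathcal N}$ on the common hyperrange, which is the intended route.
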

\begin{theorem}\cite[Corollary 2.11]{Shimorin}\label{thm_1}
    If $T$ is analytic, and $\mathcal{H}_{\infty}(T')$ is reducing for $T',$ then $T$ possesses the wandering subspace property.
\end{theorem}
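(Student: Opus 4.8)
The plan is to reformulate the wandering subspace property in terms of the Cauchy dual and then to show that the only possible obstruction, namely $\mathcal{H}_\infty(T')$, must be trivial. By Proposition \ref{orthogonal complement of hyper-range}(1) we have $\mathcal{W}_T(\mathcal{E}_T)=\mathcal{H}_\infty(T')^\perp$, so the wandering subspace property $\mathcal{H}=\mathcal{W}_T(\mathcal{E}_T)$ holds if and only if $\mathcal{H}_\infty(T')=\{0\}$, that is, $T'$ is analytic. Thus it suffices to prove that the reducing subspace $\mathcal{M}:=\mathcal{H}_\infty(T')$ is trivial.

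First I would record two elementary facts about the Cauchy dual. Since $T$ is left invertible, a direct computation gives $(T')^*T'=(T^*T)^{-1}$, whence $(T')'=T$; in particular $T'$ is itself left invertible (being bounded below) and $T$ is its Cauchy dual. Moreover, if a closed subspace reduces a left-invertible operator $S$, then it reduces $S^*S$, hence $(S^*S)^{-1}$, and therefore reduces $S'=S(S^*S)^{-1}$. Applying this with $S=T'$ shows that $\mathcal{M}$, which reduces $T'$ by hypothesis, also reduces $(T')'=T$, and that $T|_{\mathcal{M}}=(T'|_{\mathcal{M}})'$, i.e.\ restriction commutes with the Cauchy dual on this reducing subspace.

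Next I would show that $T'|_{\mathcal{M}}$ is invertible on $\mathcal{M}$. The inclusion $T'\mathcal{M}\subseteq\mathcal{M}$ is immediate from the definition of the hyper-range, and injectivity with closed range is inherited from left-invertibility of $T'$. For surjectivity, take $x\in\mathcal{M}=\bigcap_{m\geq 1}(T')^m\mathcal{H}$; left-invertibility makes the $T'$-preimage of $x$ unique, so writing $x=T'z$ and using $x\in(T')^m\mathcal{H}$ for every $m$ forces $z\in(T')^k\mathcal{H}$ for all $k$, whence $z\in\mathcal{M}$ and $T'\mathcal{M}=\mathcal{M}$. Thus $T'|_{\mathcal{M}}$ is invertible. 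Since the Cauchy dual of an invertible operator $S$ equals $(S^*)^{-1}$ and is therefore invertible, the identity $T|_{\mathcal{M}}=(T'|_{\mathcal{M}})'$ shows that $T|_{\mathcal{M}}$ is invertible on $\mathcal{M}$; in particular $T\mathcal{M}=\mathcal{M}$.

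Finally, iterating yields $T^m\mathcal{M}=\mathcal{M}$ for every $m$, so $\mathcal{M}\subseteq T^m\mathcal{H}$ for all $m$ and hence $\mathcal{M}\subseteq\bigcap_{m\geq 1}T^m\mathcal{H}=\mathcal{H}_\infty(T)$. Analyticity of $T$ forces $\mathcal{H}_\infty(T)=\{0\}$, so $\mathcal{M}=\{0\}$, i.e.\ $T'$ is analytic, and the wandering subspace property follows from the reformulation above. I expect the main obstacle to be the transfer between $T'$ and $T$: the reducing hypothesis on $\mathcal{H}_\infty(T')$ is used precisely to guarantee that this subspace reduces $T$ as well and that restriction commutes with the Cauchy dual, which is exactly what converts surjectivity of $T'|_{\mathcal{M}}$ into surjectivity of $T|_{\mathcal{M}}$ and thereby lets analyticity of $T$ close the argument.
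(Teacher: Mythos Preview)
The paper does not supply its own proof of this statement; it is simply quoted as \cite[Corollary 2.11]{Shimorin} and used as a black box. So there is nothing in the paper to compare against line by line.

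Your argument is correct and is essentially the standard route to Shimorin's corollary. The key steps are all sound: (i) by Proposition~\ref{orthogonal complement of hyper-range}(1), the wandering subspace property for $T$ is equivalent to $\mathcal{H}_\infty(T')=\{0\}$; (ii) the reducing hypothesis on $\mathcal{M}=\mathcal{H}_\infty(T')$ transfers through $(T')'=T$, so $\mathcal{M}$ reduces $T$ and $T|_{\mathcal{M}}=(T'|_{\mathcal{M}})'$; (iii) $T'|_{\mathcal{M}}$ is invertible on its hyper-range, hence so is its Cauchy dual $T|_{\mathcal{M}}$, giving $\mathcal{M}\subseteq\mathcal{H}_\infty(T)=\{0\}$ by analyticity of $T$. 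One minor point worth making explicit is that when you pass from $(T'^*T')$-invariance of $\mathcal{M}$ to $(T'^*T')^{-1}$-invariance, you are using that $\mathcal{M}$ \emph{reduces} (not merely is invariant for) $T'^*T'$, so that the restriction is invertible on $\mathcal{M}$ and the inverse respects the decomposition $\mathcal{H}=\mathcal{M}\oplus\mathcal{M}^\perp$; you did say this, but it is the place a careless reader might stumble.
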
 
The following notation was introduced in \cite{MRV}. We reproduce the definition here, as it will be used extensively in our discussion of the class of left-inverse commuting tuples throughout the article.
\begin{defn}[Left-inverse commuting tuple] \label{defn left-inverse comm tuple}
        A tuple $(T_1,\ldots, T_n)$ of left invertible commuting operators is said to be a \textit{left-inverse commuting tuple} if     \begin{eqnarray*}
      L_iT_j=T_jL_i \quad \mbox{for } 1\leq i\neq j\leq n,
  \end{eqnarray*}
  where $L_j:= (T_j^*T_j)^{-1}T_j^*.$ 
    \end{defn}
    The following fact about left-inverse commuting operator pair will be used in what follows.
\begin{prop} \label{proposition_1} \cite [Corollary 4.2]{MRV}
     Let $(T_1,\ldots, T_n)$ be a left-inverse commuting tuple of operators. Then for $1\leq i \neq j\leq n$, $ker T_i^*$ is $T_j$-reducing.
\end{prop}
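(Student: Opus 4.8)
The plan is to split the reducing property into its two constituent invariances: to show that $\ker T_i^*$ is $T_j$-reducing it suffices to prove that $\ker T_i^*$ is invariant under both $T_j$ and $T_j^*$. I would establish these two invariances from two different hypotheses — the $T_j$-invariance from the left-inverse commuting relation $L_iT_j=T_jL_i$, and the $T_j^*$-invariance from the ordinary commutativity $T_iT_j=T_jT_i$.

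The first key observation is that $\ker T_i^* = \ker L_i$. Indeed, $L_i=(T_i^*T_i)^{-1}T_i^*$ and $(T_i^*T_i)^{-1}$ is invertible, so $L_ix=0$ exactly when $T_i^*x=0$. Given $x\in\ker T_i^*=\ker L_i$, the left-inverse commuting relation yields $L_i(T_jx)=T_jL_ix=0$, hence $T_jx\in\ker L_i=\ker T_i^*$. This shows that $\ker T_i^*$ is $T_j$-invariant.

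For the $T_j^*$-invariance I would pass to orthogonal complements. Since $T_i$ is left invertible its range is closed, so $(\ker T_i^*)^\perp=\overline{T_i\mathcal{H}}=T_i\mathcal{H}$, and $\ker T_i^*$ is $T_j^*$-invariant precisely when $T_i\mathcal{H}$ is $T_j$-invariant. The latter is immediate from commutativity: for $h\in\mathcal{H}$ we have $T_j(T_ih)=T_i(T_jh)\in T_i\mathcal{H}$. Combining the two invariances gives that $\ker T_i^*$ reduces $T_j$.

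The argument is short, and the only point requiring care is the passage between the two invariance directions: one must recognize that the left-inverse commuting hypothesis controls $T_j$ on $\ker T_i^*$, whereas it is the bare commutativity (together with the closed-range property coming from left invertibility) that controls $T_j^*$. I do not expect a serious obstacle beyond correctly identifying $\ker T_i^*$ with $\ker L_i$ and keeping track of which hypothesis supplies which half of the reducing property.
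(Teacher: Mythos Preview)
Your proof is correct. The paper does not supply its own proof of this proposition; it simply cites it from \cite[Corollary 4.2]{MRV}, so there is nothing to compare against, and your argument --- using $\ker T_i^*=\ker L_i$ together with $L_iT_j=T_jL_i$ for $T_j$-invariance, and commutativity plus closed range for $T_j^*$-invariance --- is a clean and complete justification.
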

     Two notable results of this section are Theorem \ref{wold-type decomposition for pair} and Theorem \ref{wold-type decomposition}. Even though Theorem \ref{wold-type decomposition for pair} is a particular case for Theorem \ref{wold-type decomposition}, it works as a base case for Theorem \ref{wold-type decomposition} and  demonstrates that the class of left-inverse commuting pairs $(T_1, T_2)$ of bounded linear operators possess a joint Wold-type decomposition, under the assumption that each operator individually satisfies the Wold-type decomposition. The subsequent lemmas play a crucial role in establishing Theorem \ref{wold-type decomposition for pair}.
\begin{lemma}\label{H-1 and W-1 are T-2 reducing}
    Let $(T_1, T_2)$ be a pair of left-inverse commuting operators with $T_1$ having the Wold-type decomposition. Then $\bigcap_{m \geq 0}T_1^m\mathcal{H}$ and $\bigvee_{m \geq 0}T_1^m(\mathcal{E}_1)$ are $T_2$-reducing. 
\end{lemma}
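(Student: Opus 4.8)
The plan is to exploit the fact that, because $T_1$ has a Wold-type decomposition, the two subspaces in question, namely $\mathcal{H}_\infty(T_1)=\bigcap_{m\geq 0} T_1^m\mathcal H$ and $\mathcal W_{T_1}(\mathcal E_1)=\bigvee_{m\geq 0}T_1^m(\mathcal E_1)$, are mutually orthogonal complements in $\mathcal H$. Since the orthogonal complement of a $T_2$-reducing subspace is again $T_2$-reducing, it suffices to prove that just one of them, say $\mathcal W:=\mathcal W_{T_1}(\mathcal E_1)$, is invariant under both $T_2$ and $T_2^*$; the statement for $\mathcal H_\infty(T_1)=\mathcal W^\perp$ then follows for free.

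First I would record the building block supplied by Proposition \ref{proposition_1}: $\mathcal E_1=\ker T_1^*$ is $T_2$-reducing, so $T_2\mathcal E_1\subseteq\mathcal E_1$ and $T_2^*\mathcal E_1\subseteq\mathcal E_1$. The $T_2$-invariance of $\mathcal W$ is then immediate from commutativity: for $e\in\mathcal E_1$ and $m\geq 0$ one has $T_2T_1^m e=T_1^m(T_2 e)$ with $T_2 e\in\mathcal E_1$, so $T_2$ carries each generator $T_1^m e$ of $\mathcal W$ back into $\mathcal W$; passing to the closed linear span gives $T_2\mathcal W\subseteq\mathcal W$.

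The main obstacle is $T_2^*$-invariance, since $T_2^*$ need not commute with $T_1$ and the same generator-by-generator argument fails. The key device is to pass to the Cauchy dual $T_1'=T_1(T_1^*T_1)^{-1}$. Taking adjoints in the left-inverse commuting relation $L_1T_2=T_2L_1$ (with $L_1=(T_1^*T_1)^{-1}T_1^*$, so that $L_1^*=T_1'$) yields $T_2^*T_1'=T_1'T_2^*$; thus $T_2^*$ \emph{does} commute with $T_1'$. Combining this with $T_2^*\mathcal E_1\subseteq\mathcal E_1$ and the identity $\mathcal E_1=\mathcal E_{T_1'}$ noted in the introduction, I obtain, for $e\in\mathcal E_1$ and $m\geq 0$, that $T_2^*(T_1')^m e=(T_1')^m(T_2^* e)\in(T_1')^m\mathcal E_1$, whence $\mathcal W_{T_1'}(\mathcal E_1)$ is $T_2^*$-invariant. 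Now I invoke Theorem \ref{thm_2}: since $T_1$ admits a Wold-type decomposition, $\mathcal W_{T_1}(\mathcal E_1)=\mathcal W_{T_1'}(\mathcal E_1)$, so this is exactly the subspace $\mathcal W$. Hence $\mathcal W$ is $T_2^*$-invariant as well, and therefore $T_2$-reducing.

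Finally, the Wold-type decomposition $\mathcal H=\mathcal H_\infty(T_1)\oplus\mathcal W$ identifies $\mathcal H_\infty(T_1)=\bigcap_{m\geq 0}T_1^m\mathcal H$ with $\mathcal W^\perp$, which is $T_2$-reducing precisely because $\mathcal W$ is. The crux of the whole argument is the interplay between the two descriptions of the same wandering-generated subspace: $T_2$-invariance is read off from the $T_1$-picture, while $T_2^*$-invariance is read off from the $T_1'$-picture, with Theorem \ref{thm_2} guaranteeing that the two pictures coincide.
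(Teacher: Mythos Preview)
Your proof is correct, but it takes a more elaborate route than the paper's. The paper observes that \emph{both} subspaces are $T_2$-invariant: $\mathcal H_\infty(T_1)$ is $T_2$-invariant directly from the commutativity of $T_1$ and $T_2$ (if $x=T_1^m y_m$ for every $m$, then $T_2x=T_1^m(T_2y_m)$ for every $m$), and $\mathcal W_{T_1}(\mathcal E_1)$ is $T_2$-invariant by the same generator argument you give. Since the two subspaces are orthogonal complements, having each of them $T_2$-invariant immediately forces each to be $T_2^*$-invariant as well, hence $T_2$-reducing. No Cauchy dual and no appeal to Theorem~\ref{thm_2} are needed.

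Your approach instead establishes $T_2^*$-invariance of $\mathcal W_{T_1}(\mathcal E_1)$ directly, by passing to $T_1'$ via the adjoint of the left-inverse commuting relation and then invoking Theorem~\ref{thm_2} to identify $\mathcal W_{T_1'}(\mathcal E_1)$ with $\mathcal W_{T_1}(\mathcal E_1)$. This works and is a nice illustration of the Cauchy-dual mechanism, but it imports more machinery than the lemma requires. The paper's argument is lighter: the single extra observation that $\mathcal H_\infty(T_1)$ is $T_2$-invariant (one line from commutativity) replaces your entire $T_2^*$-invariance step.
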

\begin{proof}
    Considering that $T_1$ has the Wold-type decomposition, we get 
    \[\mathcal{H}=\mathcal H_\infty(T_1) \oplus \mathcal W_{T_1}(\mathcal{E}_{T_1}),\]
    where $\mathcal H_\infty(T_1)$ is $T_1$-reducing and $T_1\vert_{\mathcal H_\infty(T_1)}$ is unitary. Due to the commutativity of $T_1, T_2$, it is easy to verify that $\mathcal H_\infty(T_1)$ is $T_2$-invariant. Note here that  $x \in \mathcal W_{T_1}(\mathcal{E}_{T_1})$ implies that $x$ is limit of the elements of the form
\begin{align*}
    x^{(n)} = \sum_{k =0}^n T_1^k x_k, ~x_k \in \mathcal{E}_1.
\end{align*}
Hence $T_2x$ will be the limit of the elements of the form $T_2x^{(n)}=\sum_{k=0}^n T_1^k T_2 x_k \in \mathcal W_{T_1}(\mathcal{E}_{T_1})$, where $T_2x_k \in \mathcal{E}_1$ by Proposition \ref{proposition_1}. Thus $\mathcal W_{T_1}(\mathcal{E}_{T_1})$ is $T_2$-invariant which shows that both $\mathcal H_\infty(T_1)$ and $\mathcal W_{T_1}(\mathcal{E}_{T_1})$ are $T_2$-reducing. 
\end{proof}
In what follows, it will be useful to set the notations  
\begin{eqnarray}\label{H-00 and H-10}
    \mathcal{H}_{00}&:=& \bigcap_{m,n \geq 0}T_1^m T_2^n \mathcal{H}\\
    \mathcal{H}_{10}&:=& \bigcap_{n \geq 0}T_2^n \big(\bigvee_{m \geq 0} T_1^m(\mathcal{E}_1)\big).\nonumber
\end{eqnarray}

\begin{lemma}\label{reducing} Let $(T_1, T_2)$ be a left-inverse commuting pair of operators where both  $T_1$ and $T_2$ possess a Wold-type decomposition. Then
     the subspaces $\mathcal{H}_{00}, \mathcal{H}_{10}$ of $\mathcal{H}$ are $T_i$-reducing for $i=1,2$. 
\end{lemma}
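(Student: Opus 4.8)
The plan is to reduce everything to the elementary fact that an intersection of two reducing subspaces is again reducing, after first re-expressing $\mathcal H_{00}$ and $\mathcal H_{10}$ as such intersections. First I would rewrite the two subspaces using the commutativity of the pair together with the left-invertibility (hence injectivity) of $T_2$. Since each $T_2^n$ is injective, for any decreasing family $\{\mathcal M_m\}$ of subspaces one has $T_2^n\big(\bigcap_m \mathcal M_m\big)=\bigcap_m T_2^n \mathcal M_m$; combining this with $T_1^mT_2^n=T_2^nT_1^m$ gives
\begin{align*}
\mathcal H_{00}=\bigcap_{m,n\geq 0}T_1^mT_2^n\mathcal H=\bigcap_{n\geq 0}T_2^n\Big(\bigcap_{m\geq 0}T_1^m\mathcal H\Big)=\bigcap_{n\geq 0}T_2^n\,\mathcal H_\infty(T_1),
\end{align*}
while $\mathcal H_{10}=\bigcap_{n\geq 0}T_2^n\,\mathcal W_{T_1}(\mathcal E_1)$ is just the definition.

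Next I would establish the two key identities
\begin{align*}
\mathcal H_{00}=\mathcal H_\infty(T_1)\cap\mathcal H_\infty(T_2),\qquad
\mathcal H_{10}=\mathcal W_{T_1}(\mathcal E_1)\cap\mathcal H_\infty(T_2).
\end{align*}
Both are instances of the following claim: if $\mathcal M$ is a $T_2$-reducing subspace, then $\bigcap_{n\geq 0}T_2^n\mathcal M=\mathcal M\cap\mathcal H_\infty(T_2)$. The inclusion $\subseteq$ is immediate (take $n=0$, and note $T_2^n\mathcal M\subseteq T_2^n\mathcal H$). For $\supseteq$, given $x\in\mathcal M\cap\mathcal H_\infty(T_2)$ and $n\geq 0$, write $x=T_2^n y$ and split $y=P_{\mathcal M}y+(I-P_{\mathcal M})y$; since $\mathcal M$ reduces $T_2$, the summands $T_2^nP_{\mathcal M}y$ and $T_2^n(I-P_{\mathcal M})y$ lie in $\mathcal M$ and $\mathcal M^\perp$ respectively, so comparing with $x\in\mathcal M$ forces $T_2^n(I-P_{\mathcal M})y=0$, whence $(I-P_{\mathcal M})y=0$ by injectivity and $x\in T_2^n\mathcal M$. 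By Lemma \ref{H-1 and W-1 are T-2 reducing}, both $\mathcal H_\infty(T_1)$ and $\mathcal W_{T_1}(\mathcal E_1)$ are $T_2$-reducing, so the claim applies to each and yields the two identities.

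Finally I would assemble the reducing conclusions. From the Wold-type decomposition of $T_1$, the subspace $\mathcal H_\infty(T_1)$ is $T_1$-reducing, and hence so is its orthogonal complement $\mathcal W_{T_1}(\mathcal E_1)=\mathcal H_\infty(T_1)^\perp$; from the Wold-type decomposition of $T_2$, the subspace $\mathcal H_\infty(T_2)$ is $T_2$-reducing. Lemma \ref{H-1 and W-1 are T-2 reducing} shows that $\mathcal H_\infty(T_1)$ and $\mathcal W_{T_1}(\mathcal E_1)$ are $T_2$-reducing, and its version with the roles of $T_1,T_2$ interchanged (valid precisely because both operators admit a Wold-type decomposition) shows that $\mathcal H_\infty(T_2)$ is $T_1$-reducing. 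Thus in each identity both factors are simultaneously $T_1$- and $T_2$-reducing, and since the intersection of subspaces that are invariant under both an operator and its adjoint is again so invariant, $\mathcal H_{00}$ and $\mathcal H_{10}$ are $T_i$-reducing for $i=1,2$.

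I expect the main obstacle to be the $\supseteq$ direction of the intersection identity: one must pull an element of $\mathcal H_\infty(T_2)$ back through each power $T_2^n$ and argue that the preimage stays inside $\mathcal M$, and this is exactly where the reducing hypothesis supplied by Lemma \ref{H-1 and W-1 are T-2 reducing} and the injectivity coming from left-invertibility are both indispensable; everything after these identities is formal.
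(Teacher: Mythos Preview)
Your proof is correct and takes a genuinely different route from the paper's. The paper proceeds by direct verification: for $\mathcal H_{00}$ it checks $L_i$-invariance via the left-inverse commuting relation and then uses that $T_i$ is unitary on $\mathcal H_\infty(T_i)$ together with $T_iL_i$ being the range projection to convert $L_i$-invariance into $T_i^*$-invariance; for the $T_1^*$-invariance of $\mathcal H_{10}$ it passes to the orthogonal complement $\mathcal H_{10}^\perp$ in $\mathcal W_{T_1}(\mathcal E_1)$, shows $T_2$ is analytic there, verifies that $\bigcap_n T_2'^{\,n}(\mathcal H_{10}^\perp)$ is $T_2'$-reducing, invokes Shimorin's Theorem~\ref{thm_1} to obtain the wandering subspace property, and identifies $\mathcal H_{10}^\perp=\bigvee_{m,n\ge 0}T_1^mT_2^n(\mathcal E_1\cap\mathcal E_2)$, which is visibly $T_1$-invariant. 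Your approach bypasses this machinery by recognizing $\mathcal H_{00}=\mathcal H_\infty(T_1)\cap\mathcal H_\infty(T_2)$ and $\mathcal H_{10}=\mathcal W_{T_1}(\mathcal E_1)\cap\mathcal H_\infty(T_2)$ as intersections of subspaces already known, from Lemma~\ref{H-1 and W-1 are T-2 reducing} and its symmetric counterpart, to reduce both $T_1$ and $T_2$; the proof of the intersection identity via injectivity and the reducing projection is clean and complete. Your argument is more economical and avoids the appeal to Shimorin's result; on the other hand, the paper's longer detour yields as a by-product the explicit description $\mathcal H_{10}^\perp=\bigvee_{m,n\ge 0}T_1^mT_2^n(\mathcal E_1\cap\mathcal E_2)$, which is precisely the subspace $\mathcal H_{11}$ appearing in Theorem~\ref{wold-type decomposition for pair}, and the intersection identities you prove are in fact stated and used by the paper later, in the proof of that theorem.
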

\begin{proof}
    The fact that $\mathcal{H}_{00}$ is both $T_1$-invariant and $T_2$-invariant is immediate because of the commutativity of $T_1$ and $T_2$. 
    Consider $x \in \mathcal{H}_{00}$. Then $x = T_1^m T_2^n x_{m,n}$ for some $x_{m,n} \in \mathcal{H},$  $m,n \in\mathbb Z_{\geq 0}$ and using the left-inverse commuting property, we get
\begin{align*}
   L_2x=L_2  T_1^m T_2^n x_{m,n} = T_1^m L_2T_2^nx_{m,n} = T_1^m T_2^{n-1} x_{m,n} \in \mathcal{H}_{00}. 
\end{align*}
    This shows that $\mathcal{H}_{00}$ is $L_2$-invariant. As $T_2\vert_{\mathcal H_\infty(T_2)}$ is unitary, $\mathcal{H}_{00} \subseteq \mathcal H_\infty(T_2),$ and $T_2L_2$ projects onto the range of $T_2$, it follows that 
\begin{align*}
    T_2^*x = T_2^* T_2L_2x = L_2x \in \mathcal{H}_{00}.
\end{align*} 
Hence $\mathcal{H}_{00}$ is $T_2$-reducing. In a similar way, one can also see that $\mathcal{H}_{00}$ is $T_1$-reducing.

The subspace $\mathcal{H}_{10}$ is clearly  both $T_1$-invariant and $T_2$-invariant. Let $x \in \mathcal{H}_{10}$ be given by $x=T_2^nx_n,~x_n \in \bigvee_{m \geq 0}T_1^m(\mathcal{E}_1)$ for all $n \geq 0$. Then
\begin{align*}
    L_2x=L_2T_2^nx_n = T_2^{n-1}x_n \in \mathcal{H}_{10}, 
\end{align*}
    and this shows that $\mathcal{H}_{10}$ is $L_2$-invariant. Proceeding as in the case of $\mathcal{H}_{00}$, because of the fact that $T_2L_2$ projects onto the range of $T_2$, we get
\begin{align*}
     T_2^*x= T_2^*T_2L_2x =  L_2x \in \mathcal{H}_{10},
\end{align*}
where $T_2\vert_{\mathcal H_\infty(T_2)}$ is unitary. We claim that $\mathcal{H}_{10}$ is $T_1^*$-invariant. It is enough to show that $\mathcal{H}_{10}^{\perp}$ in $\mathcal W_{T_1}(\mathcal E_{T_1})$ is $T_1$-invariant.
Note that $$\mathcal{H}_{10}^{\perp}=\bigvee_{m \geq 0}T_1^m(\mathcal{E}_1) \ominus \bigcap_{n \geq 0}T_2^n(\bigvee_{m \geq 0}T_1^m(\mathcal{E}_1)).$$
 We shall show that $\bigcap_{n \geq 0}T_2^n (\mathcal{H}_{10}^{\perp})=\{0\}$. In fact, $\mathcal{H}_{10}^{\perp} \subseteq \mathcal{H}_1^{(1)}$ implies $\bigcap_{n \geq 0}T_2^n (\mathcal{H}_{10}^{\perp}) \subseteq \bigcap_{n \geq 0}T_2^n (\mathcal{H}_1^{(1)})=\mathcal{H}_{10}$. Also, since $\mathcal{H}_{10}^{\perp}$ is $T_2$-invariant, we get $\bigcap_{n \geq 0}T_2^n(\mathcal{H}_{10}^{\perp}) \subseteq \mathcal{H}_{10}^{\perp}$. This proves the claim that $T_2$ is analytic on $\mathcal{H}_{10}^{\perp}$. Note here that $\bigcap_{n \geq 0}T_2^{'n}(\mathcal{H}_{10}^{\perp})$ is $T_2^{'}$-invariant. For $x \in \bigcap_{n \geq 0}T_2^{'n}(\mathcal{H}_{10}^{\perp})$ we have
\begin{align*}
    T_2^{'*}x = L_2x = L_2T_2T_2^*x = T_2^*x = T_2^*T_2^{'n}x_n = T_2^{'n-1}x_n \in \bigcap_{n \geq 0} T_2^{'n}(\mathcal{H}_{10}^{\perp}),
\end{align*}
where $\bigcap_{n \geq 0}T_2^{'n}(\mathcal{H}_{10}^{\perp}) \subseteq \bigcap_{n \geq 0}T_2^{'n}\mathcal{H} = \bigcap_{n \geq 0}T_2^n \mathcal{H}$ and $T_2 \vert_{\bigcap_{n \geq 0}T_2^n \mathcal{H}}$ is unitary as $T_2$ has the Wold-type decomposition. This concludes that $\bigcap_{n \geq 0}T_2^{'n}(\mathcal{H}_{10}^{\perp})$ is $T_2^{'}$-reducing. Hence, by the Theorem \ref{thm_1}, it is immediate that $T_2$ on $\mathcal{H}_{10}^{\perp}$ has the wandering subspace property. Thus
\begin{align*}
    \mathcal{H}_{10}^{\perp} = \bigvee_{m,n \geq 0} T_1^m T_2^n (\mathcal{E}_1 \cap \mathcal{E}_2).
\end{align*}
    Therefore, it is clear that $\mathcal{H}_{10}^{\perp}$ is $T_1$-invariant and so $\mathcal{H}_{10}$ is $T_1$-reducing. This completes the proof. 
\end{proof}
\begin{lemma} \label{analyticity of T_2}
    Consider $(T_1, T_2)$ to be a left-inverse commuting pair of operators where both  $T_1$ and $T_2$ possess a Wold-type decomposition. Then the operator $T_2$ is analytic on $ \mathcal H_\infty(T_1)\ominus \mathcal{H}_{00}$, that is, 
    $\bigcap_{n \geq 0}T_2^n(\mathcal H_\infty(T_1)\ominus \mathcal{H}_{00})=\{0\}$. 
    
\end{lemma}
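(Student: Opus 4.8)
The plan is to reduce the claim to the single identity that the joint hyper-range $\mathcal{H}_{00}$ coincides with (or at least contains) the intersection $\mathcal H_\infty(T_1)\cap \mathcal H_\infty(T_2)$, and then to show that any vector surviving all powers of $T_2$ inside $\mathcal H_\infty(T_1)\ominus\mathcal{H}_{00}$ is forced into this intersection, hence becomes orthogonal to itself. First I would record the structure already in hand: by Lemma \ref{H-1 and W-1 are T-2 reducing} the subspace $\mathcal H_\infty(T_1)$ is $T_2$-reducing, and by Lemma \ref{reducing} the subspace $\mathcal{H}_{00}$ is $T_2$-reducing; since $\mathcal{H}_{00}\subseteq\mathcal H_\infty(T_1)$, the difference $\mathcal N:=\mathcal H_\infty(T_1)\ominus\mathcal{H}_{00}$ is again $T_2$-reducing. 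In particular $T_2$ genuinely restricts to $\mathcal N$ and $T_2^{n}(\mathcal N)\subseteq\mathcal N$ for every $n\geq 0$, so that $\bigcap_{n\geq 0}T_2^n(\mathcal N)\subseteq\mathcal N$.

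The key step I would establish is the inclusion $\mathcal H_\infty(T_1)\cap \mathcal H_\infty(T_2)\subseteq\mathcal{H}_{00}$. To prove it, take $x$ in the left-hand side. Because $T_2$ admits a Wold-type decomposition, $T_2\vert_{\mathcal H_\infty(T_2)}$ is unitary, so $T_2^{*}$ acts as $T_2^{-1}$ on $\mathcal H_\infty(T_2)$; and because $\mathcal H_\infty(T_1)$ is $T_2$-reducing it is $T_2^{*}$-invariant. Consequently $(T_2^{*})^{n}x\in \mathcal H_\infty(T_1)\cap \mathcal H_\infty(T_2)$ for all $n$. Writing $(T_2^{*})^{n}x=T_1^{m}y$ (possible since this vector lies in $\mathcal H_\infty(T_1)=\bigcap_mT_1^m\mathcal H$) and using the commutativity of $T_1$ and $T_2$, one gets $x=T_2^{n}(T_2^{*})^{n}x=T_1^{m}T_2^{n}y\in T_1^{m}T_2^{n}\mathcal H$ for every pair $m,n\geq 0$, whence $x\in\mathcal{H}_{00}$.

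With this inclusion in place the assembly is immediate. Since $\mathcal N$ is $T_2$-invariant, $\bigcap_{n\geq 0}T_2^{n}(\mathcal N)\subseteq\bigcap_{n\geq 0}T_2^{n}\mathcal H=\mathcal H_\infty(T_2)$, while trivially $\bigcap_{n\geq 0}T_2^{n}(\mathcal N)\subseteq\mathcal N\subseteq\mathcal H_\infty(T_1)$. Combining these with the key inclusion yields $\bigcap_{n\geq 0}T_2^{n}(\mathcal N)\subseteq \mathcal H_\infty(T_1)\cap\mathcal H_\infty(T_2)\subseteq\mathcal{H}_{00}$. But $\mathcal N\perp\mathcal{H}_{00}$ by construction, so the intersection is contained in $\mathcal{H}_{00}\cap\mathcal{H}_{00}^{\perp}=\{0\}$, which is exactly the asserted analyticity of $T_2$ on $\mathcal H_\infty(T_1)\ominus\mathcal{H}_{00}$.

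The main obstacle is the inclusion $\mathcal H_\infty(T_1)\cap\mathcal H_\infty(T_2)\subseteq\mathcal{H}_{00}$: one cannot simply interchange images and intersections, so the bare definition $\mathcal{H}_{00}=\bigcap_{m,n\geq 0}T_1^{m}T_2^{n}\mathcal H$ is not directly manipulable. The leverage comes precisely from the unitarity of $T_2$ on $\mathcal H_\infty(T_2)$, which supplies the backward action $T_2^{-1}=T_2^{*}$, together with the left-inverse commuting hypothesis packaged in the $T_2$-reducing property of $\mathcal H_\infty(T_1)$; these two facts are what let the backward orbit $(T_2^{*})^{n}x$ stay inside $\mathcal H_\infty(T_1)$ and be recombined as an element of every $T_1^{m}T_2^{n}\mathcal H$.
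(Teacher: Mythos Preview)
Your argument is correct and follows essentially the same route as the paper: both show that $\bigcap_{n\ge 0}T_2^{n}(\mathcal H_\infty(T_1)\ominus\mathcal H_{00})$ lies simultaneously in $\mathcal H_{00}$ and in $\mathcal H_{00}^{\perp}$. The paper reaches the first containment by asserting $\bigcap_{n\ge 0}T_2^{n}(\mathcal H_\infty(T_1))=\mathcal H_{00}$ without justification, whereas you instead pass through $\mathcal H_\infty(T_1)\cap\mathcal H_\infty(T_2)$ and give an explicit proof of $\mathcal H_\infty(T_1)\cap\mathcal H_\infty(T_2)\subseteq\mathcal H_{00}$ using the unitarity of $T_2$ on $\mathcal H_\infty(T_2)$ and the $T_2$-reducibility of $\mathcal H_\infty(T_1)$; this is exactly the content the paper leaves implicit, so your write-up is in fact more complete on that step.
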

\begin{proof}
    Note that  $\mathcal H_\infty(T_1)\ominus \mathcal{H}_{00} \subseteq \mathcal H_\infty(T_1)$, therefore it follows that $$\bigcap_{n \geq 0}T_2^n(\mathcal H_\infty(T_1)\ominus \mathcal{H}_{00}) \subseteq \bigcap_{n \geq 0} T_2^n (\mathcal H_\infty(T_1)) = \mathcal{H}_{00}.$$ 
    From Lemma \ref{reducing}, we have  $\mathcal{H}_{00}$ is $T_2$-reducing. This shows that $$\bigcap_{n \geq 0}T_2^n(\mathcal H_\infty(T_1)\ominus \mathcal{H}_{00}) \subseteq \bigcap_{n \geq 0}T_2^n(\mathcal{H}_{00})^{\perp} \subseteq \mathcal{H}_{00}^{\perp}.$$ 
    This proves the lemma.
\end{proof}
\begin{lemma}
    Given any operator $T \in \mathcal{L}(\mathcal{H})$, we have $\mathcal{H} \ominus T\mathcal{H} = \mathcal H_\infty(T)^{\perp} \cap \big(T(\mathcal H_\infty(T)^{\perp})\big)^{\perp}.$
\end{lemma}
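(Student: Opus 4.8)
The plan is to establish the two inclusions separately, writing $M := \mathcal{H}_\infty(T)$ for brevity and reading $\mathcal{H} \ominus T\mathcal{H}$ as $(\overline{T\mathcal{H}})^\perp = (T\mathcal{H})^\perp = \mathcal{E}_T$, which makes sense for an arbitrary $T$ even when the range is not closed.

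For the inclusion ``$\subseteq$'', I would simply exploit that both $M$ and $T(M^\perp)$ sit inside $T\mathcal{H}$: indeed $M = \bigcap_{m \geq 1} T^m\mathcal{H} \subseteq T\mathcal{H}$, and trivially $T(M^\perp) \subseteq T\mathcal{H}$. Passing to orthogonal complements reverses both inclusions, so $(T\mathcal{H})^\perp \subseteq M^\perp$ and $(T\mathcal{H})^\perp \subseteq \big(T(M^\perp)\big)^\perp$, and intersecting gives $(T\mathcal{H})^\perp \subseteq M^\perp \cap \big(T(M^\perp)\big)^\perp$.

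For the reverse inclusion ``$\supseteq$'', the one genuinely non-formal ingredient is that $M$ is $T$-invariant. I would record this first: since the chain $T\mathcal{H} \supseteq T^2\mathcal{H} \supseteq \cdots$ is decreasing, $\bigcap_{m \geq 2} T^m\mathcal{H} = M$, whence $T M \subseteq \bigcap_{m \geq 1} T^{m+1}\mathcal{H} = M$. Granting this, take any $x \in M^\perp \cap \big(T(M^\perp)\big)^\perp$ and any $y \in \mathcal{H}$, and split $y = y_1 + y_2$ along the orthogonal decomposition $\mathcal{H} = M \oplus M^\perp$. Then $Ty_1 \in TM \subseteq M$ is orthogonal to $x$, while $Ty_2 \in T(M^\perp)$ is orthogonal to $x$ as well, so $\langle x, Ty \rangle = 0$; since $y$ was arbitrary this gives $x \in (T\mathcal{H})^\perp$, completing the equality.

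The hard part is essentially non-existent here: the argument is a short manipulation of orthogonal complements, and the only observation requiring a line of justification is the invariance $T \mathcal{H}_\infty(T) \subseteq \mathcal{H}_\infty(T)$, which is immediate from the monotonicity of the range chain. The only point to stay alert to is the harmless closure issue in interpreting $\mathcal{H} \ominus T\mathcal{H}$, since $T$ is not assumed left invertible and $T\mathcal{H}$ need not be closed; but taking orthogonal complements is insensitive to passing to closures, so it causes no trouble.
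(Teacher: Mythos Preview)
Your proof is correct and follows essentially the same approach as the paper's: both directions are handled by the same mechanisms, namely the containments $M \subseteq T\mathcal{H}$ and $T(M^\perp) \subseteq T\mathcal{H}$ for ``$\subseteq$'', and the orthogonal splitting $\mathcal{H} = M \oplus M^\perp$ together with the $T$-invariance of $M$ for ``$\supseteq$''. Your write-up is in fact slightly more careful, since you spell out the justification for $T\,\mathcal{H}_\infty(T) \subseteq \mathcal{H}_\infty(T)$ and flag the closure issue when $T\mathcal{H}$ need not be closed, neither of which the paper makes explicit.
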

\begin{proof}
    Let $\alpha \in \mathcal H_\infty(T)^{\perp} \cap \big(T(\mathcal H_\infty(T)^{\perp})\big)^{\perp}$. We need to show that $\alpha$ is orthogonal to the range space  $T\mathcal{H}$. 
    Let $x\in H.$ Since  $\mathcal{H}=\mathcal H_\infty(T) \oplus \mathcal H_\infty(T)^{\perp},$  there exist unique $y \in \mathcal H_\infty(T)$ and $z \in \mathcal H_\infty(T)^{\perp}$ such that $x = y+z.$ Since $\mathcal H_\infty(T)$ is $T$-invariant, it follows that  
\begin{align*}
    \langle \alpha, Tx\rangle = \langle \alpha, T(y+z) \rangle = \langle \alpha, Ty\rangle + \langle \alpha, Tz\rangle=0.
\end{align*}
     This implies that $\alpha$ is orthogonal to $T\mathcal{H}$. 

    Conversely, let $\alpha \in \mathcal{H} \ominus T\mathcal{H} $. Since $\alpha \perp T\mathcal{H},$ it trivially follows that $\alpha$ is orthogonal to $\mathcal H_\infty(T)$. Note that, $T(\mathcal H_\infty(T)^{\perp}) \subseteq T\mathcal{H}$ and therefore  $(T\mathcal{H})^{\perp} \subseteq \big(T(\mathcal H_\infty(T)^{\perp})\big)^{\perp}.$ This completes the proof of the lemma. 
\end{proof}

\begin{lemma}\label{orthogonal complement expression}
    Consider $(T_1, T_2)$ to be a left-inverse commuting pair of operators where $T_1$ possesses a Wold-type decomposition. Then $\mathcal H_\infty(T_1)=T_2(\mathcal H_\infty(T_1)) \oplus \bigcap_{m \geq 0}T_1^m(\mathcal{E}_2).$  
\end{lemma}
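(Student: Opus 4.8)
The plan is to work entirely inside the reducing subspace $\mathcal{K}:=\mathcal H_\infty(T_1)$ and to recognise the asserted direct sum as the orthogonal decomposition of $\mathcal K$ into the range of $T_2|_{\mathcal K}$ and the kernel of its adjoint. First I would recall from Lemma \ref{H-1 and W-1 are T-2 reducing} that $\mathcal K=\mathcal H_\infty(T_1)$ is $T_2$-reducing. Since $T_2$ is left invertible it is bounded below on $\mathcal H$, hence its restriction to $\mathcal K$ is bounded below and $T_2(\mathcal K)$ is a closed subspace of $\mathcal K$. Because $\mathcal K$ reduces $T_2$, the adjoint of $T_2|_{\mathcal K}$ is $T_2^*|_{\mathcal K}$, so the orthogonal complement of the range is
\[
\mathcal K\ominus T_2(\mathcal K)=\ker\big(T_2^*|_{\mathcal K}\big)=\mathcal H_\infty(T_1)\cap\ker T_2^*=\mathcal H_\infty(T_1)\cap\mathcal E_2 .
\]
This already yields the orthogonal splitting $\mathcal H_\infty(T_1)=T_2(\mathcal H_\infty(T_1))\oplus\big(\mathcal H_\infty(T_1)\cap\mathcal E_2\big)$.

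The remaining, and genuinely content-bearing, step is to identify $\mathcal H_\infty(T_1)\cap\mathcal E_2$ with $\bigcap_{m\geq 0}T_1^m(\mathcal E_2)$. Here I would invoke Proposition \ref{proposition_1}, which guarantees that $\mathcal E_2=\ker T_2^*$ is $T_1$-reducing, so that $T_1$ restricts to an operator on $\mathcal E_2$. The inclusion $\bigcap_{m\geq 0}T_1^m(\mathcal E_2)\subseteq\mathcal H_\infty(T_1)\cap\mathcal E_2$ is immediate, since any such element lies in $T_1^m\mathcal H$ for every $m$ and (taking $m=0$) in $\mathcal E_2$. For the reverse inclusion, take $x\in\mathcal H_\infty(T_1)\cap\mathcal E_2$; for each $m$ there is $y\in\mathcal H$ with $x=T_1^m y$, and splitting $y=y'+y''$ along $\mathcal E_2\oplus\mathcal E_2^{\perp}$ and using that $\mathcal E_2$ is $T_1$-reducing gives $x=T_1^m y'+T_1^m y''$ with the two summands lying in $\mathcal E_2$ and $\mathcal E_2^{\perp}$ respectively; since $x\in\mathcal E_2$ and $T_1$ is injective, $T_1^m y''=0$ forces $y''=0$, whence $x\in T_1^m(\mathcal E_2)$. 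As $m$ was arbitrary, $x\in\bigcap_{m\geq 0}T_1^m(\mathcal E_2)$, and combining the two inclusions completes the identification.

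I expect the main obstacle to be this second step, the equality $\mathcal H_\infty(T_1)\cap\mathcal E_2=\bigcap_{m\geq 0}T_1^m(\mathcal E_2)$: the subtle point is that membership in $\mathcal H_\infty(T_1)$ a priori only produces preimages $y\in\mathcal H$, and it is precisely the $T_1$-reducibility of $\mathcal E_2$ (Proposition \ref{proposition_1}) together with the injectivity of the left invertible operator $T_1$ that lets one upgrade these to preimages lying inside $\mathcal E_2$. The first step, by contrast, is routine Hilbert-space bookkeeping, the only points needing care being the closedness of $T_2(\mathcal K)$ (from left invertibility) and the fact that $\mathcal K$ reduces $T_2$ (from Lemma \ref{H-1 and W-1 are T-2 reducing}), which together justify the identity $\ker(T_2^*|_{\mathcal K})=\mathcal K\cap\ker T_2^*$.
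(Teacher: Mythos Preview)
Your proof is correct and follows essentially the same route as the paper's: both rely on Lemma~\ref{H-1 and W-1 are T-2 reducing} to identify $\mathcal H_\infty(T_1)\ominus T_2(\mathcal H_\infty(T_1))$ with $\mathcal H_\infty(T_1)\cap\mathcal E_2$, and then on Proposition~\ref{proposition_1} (the $T_1$-reducibility of $\mathcal E_2$) to upgrade preimages in $\mathcal H$ to preimages in $\mathcal E_2$. The only cosmetic difference is that the paper extracts those preimages by applying the left inverse $L_1^m$, whereas you split along $\mathcal E_2\oplus\mathcal E_2^\perp$ and use the injectivity of $T_1$; these are equivalent manoeuvres.
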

\begin{proof}
    Clearly, $\bigcap_{m \geq 0}T_1^m(\mathcal{E}_2) \subseteq \bigcap_{m \geq 0}T_1^m\mathcal{H}$. 
    We claim that $\bigcap_{m \geq 0}T_1^m(\mathcal{E}_2) \subseteq \mathcal H_\infty(T_1) \ominus T_2 (\mathcal H_\infty(T_1)).$
    Let $x = T_1^m y_m \in \bigcap_{m \geq 0}T_1^m(\mathcal{E}_2)$, with $y_m \in \mathcal{E}_2,  \forall m \geq 0$. Then for any $z\in T_2(\mathcal H_\infty(T_1)),$ note that $\langle x, T_2 z \rangle = \langle T_2^*x, z \rangle = 0.$
    This proves the claim.  

    To see the reverse inclusion, let $x \in \mathcal H_\infty(T_1) \ominus T_2 (\mathcal H_\infty(T_1))$. For each $m \geq 0,$  there exists $x_m \in \mathcal{H}$ such that $x = T_1^m x_m.$ It is enough to show that $x_m \in \mathcal{E}_2, \forall m \geq 0$. Since $x$ is orthogonal to $T_2(\mathcal H_\infty(T_1)),$ for any $z\in T_2(\mathcal H_\infty(T_1)),$ we get that $\langle x, T_2 z \rangle = 0 = \langle T_2^*x, z\rangle.$
    From Lemma \ref{H-1 and W-1 are T-2 reducing}, we know that $\mathcal H_\infty(T_1)$ is $T_2$-reducing, hence we obtain $T_2^*x = 0.$ Thus $x \in \mathcal{E}_{T_2}$. Since $\mathcal{E}_{T_2}$ is $T_1$-reducing (see Lemma \ref{proposition_1}), observe that 
    \[ L_1^mx=L_1^mT_1^mx_m = x_m \in \mathcal{E}_2, ~\forall m \geq 0.\] 
    This shows that  $\bigcap_{m \geq 0}T_1^m\mathcal{H} \ominus T_2 \bigcap_{m \geq 0}T_1^m\mathcal{H} \subseteq \bigcap_{m \geq 0}T_1^m(\mathcal{E}_2)$ and completes the proof. 
\end{proof} 

\begin{lemma} Let $(T_1,T_2)$ be a pair of left-inverse commuting operators defined on $\mathcal{H}$ such that $T_1$ has the Wold-type decomposition. Then 
\[\mathcal W_1(\mathcal E_1)\ominus T_2(\mathcal W_1(\mathcal E_1))=\mathcal E_2\cap \mathcal W_1(\mathcal E_1)=\mathcal W_1(\mathcal E_1\cap \mathcal E_2).\]  
\end{lemma}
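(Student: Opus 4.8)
The plan is to handle the two equalities separately, reading off the first one directly from the fact (Lemma \ref{H-1 and W-1 are T-2 reducing}) that $\mathcal W:=\mathcal W_1(\mathcal E_1)$ is $T_2$-reducing, and obtaining the second by restricting $T_1$ to $\mathcal E_2\cap\mathcal W$ and invoking Shimorin's criterion, Theorem \ref{thm_1}. For the first equality, since $\mathcal W$ reduces $T_2$, the compression $T_2|_{\mathcal W}$ is a left-invertible operator on $\mathcal W$ whose adjoint is $T_2^*|_{\mathcal W}$; hence $\mathcal W\ominus T_2(\mathcal W)=\ker\big((T_2|_{\mathcal W})^*\big)=\mathcal W\cap\ker T_2^*=\mathcal E_2\cap\mathcal W$. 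Concretely, for $x\in\mathcal W$ one has $x\perp T_2\mathcal W$ iff $T_2^*x\perp\mathcal W$, and because $T_2^*x\in\mathcal W$ this forces $T_2^*x=0$, i.e. $x\in\mathcal E_2$.

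For the second equality, the inclusion $\mathcal W_1(\mathcal E_1\cap\mathcal E_2)\subseteq\mathcal E_2\cap\mathcal W$ is immediate: $\mathcal E_1\cap\mathcal E_2\subseteq\mathcal E_1\subseteq\mathcal W$, and since $\mathcal E_2$ is $T_1$-reducing (Proposition \ref{proposition_1}) it is $T_1$-invariant, so each $T_1^m(\mathcal E_1\cap\mathcal E_2)$ lies in the closed subspace $\mathcal E_2\cap\mathcal W$. For the reverse inclusion I would set $\mathcal N:=\mathcal E_2\cap\mathcal W$ and first observe that it is $T_1$-reducing, being the intersection of the $T_1$-reducing subspaces $\mathcal E_2$ and $\mathcal W$. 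The key preliminary is a short auxiliary fact: for any $T_1$-reducing subspace $\mathcal M$ one has $\mathcal H_\infty(T_1|_{\mathcal M})=\mathcal M\cap\mathcal H_\infty(T_1)$, where the nontrivial containment uses that $L_1=(T_1^*T_1)^{-1}T_1^*$ preserves $\mathcal M$, so that $L_1^m y\in\mathcal M$ recovers the required preimages of $y\in\mathcal M\cap\mathcal H_\infty(T_1)$. Applying this with $\mathcal M=\mathcal N$ gives $\mathcal H_\infty(T_1|_{\mathcal N})=\mathcal N\cap\mathcal H_\infty(T_1)=\mathcal E_2\cap(\mathcal W\cap\mathcal H_\infty(T_1))=\{0\}$, so $T_1|_{\mathcal N}$ is analytic.

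The crux is then to feed $T_1|_{\mathcal N}$ into Theorem \ref{thm_1}. Since the Cauchy dual commutes with restriction to a reducing subspace, $(T_1|_{\mathcal N})'=T_1'|_{\mathcal N}$, and the auxiliary fact applied to $T_1'$, together with Theorem \ref{thm_2} (which gives $\mathcal H_\infty(T_1')=\mathcal H_\infty(T_1)$), yields $\mathcal H_\infty\big((T_1|_{\mathcal N})'\big)=\mathcal N\cap\mathcal H_\infty(T_1)=\{0\}$, which is trivially reducing. Theorem \ref{thm_1} then applies and gives $\mathcal N=\mathcal W_{T_1|_{\mathcal N}}(\mathcal E_{T_1|_{\mathcal N}})=\bigvee_{m\geq0}T_1^m(\mathcal N\cap\mathcal E_1)$. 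Identifying the wandering subspace, $\mathcal N\cap\mathcal E_1=\mathcal E_2\cap\mathcal W\cap\mathcal E_1=\mathcal E_1\cap\mathcal E_2$ (using $\mathcal E_1\cap\mathcal E_2\subseteq\mathcal W$), so $\mathcal E_2\cap\mathcal W=\mathcal W_1(\mathcal E_1\cap\mathcal E_2)$, closing the chain of equalities. The main obstacle I anticipate is exactly the verification that $T_1|_{\mathcal N}$ meets the hypotheses of Theorem \ref{thm_1}: one must be careful that both $\mathcal H_\infty(\cdot)$ and the Cauchy dual behave well under compression to reducing subspaces, and it is this bookkeeping, rather than any single sharp estimate, that drives the argument.
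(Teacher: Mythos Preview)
Your proof is correct and, for the first equality and the easy inclusion $\mathcal W_1(\mathcal E_1\cap\mathcal E_2)\subseteq\mathcal E_2\cap\mathcal W_1(\mathcal E_1)$, runs exactly as the paper's. For the remaining inclusion $\mathcal E_2\cap\mathcal W_1(\mathcal E_1)\subseteq\mathcal W_1(\mathcal E_1\cap\mathcal E_2)$ you take a genuinely different route: you compress $T_1$ to the reducing subspace $\mathcal N=\mathcal E_2\cap\mathcal W_1(\mathcal E_1)$, use your auxiliary fact together with Theorem~\ref{thm_2} to see that both $\mathcal H_\infty(T_1|_{\mathcal N})$ and $\mathcal H_\infty\big((T_1|_{\mathcal N})'\big)$ vanish, and then invoke Shimorin's criterion (Theorem~\ref{thm_1}) to obtain the wandering subspace property for $T_1|_{\mathcal N}$. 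The paper instead argues directly at the level of elements: given $x\in\mathcal E_2\cap\mathcal W_1(\mathcal E_1)$ it approximates $x$ by sums $\sum_{k=0}^n T_1^k x_k$ with $x_k\in\mathcal E_1$, and uses that $\mathcal E_2$ is $T_1$-reducing (hence stable under $L_1$ and under $P_{\mathcal E_1}=I-T_1L_1$) to strip off the coefficients one at a time and verify $x_k\in\mathcal E_1\cap\mathcal E_2$. Your approach is more conceptual and reuses the Wold-type machinery already assembled; the bookkeeping you flagged (that the Cauchy dual and $\mathcal H_\infty$ commute with restriction to a reducing subspace) is routine, since a $T_1$-reducing subspace is automatically invariant under $T_1^*$, $(T_1^*T_1)^{-1}$, $L_1$, and $T_1'$. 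The paper's argument is more hands-on and avoids appealing to Theorems~\ref{thm_1} and~\ref{thm_2} at this stage, at the cost of a somewhat delicate coefficient computation.
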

\begin{proof}
 From Lemma \ref{H-1 and W-1 are T-2 reducing}, we get that $W_1(\mathcal E_1)$ is $T_2$-reducing. Hence $T_2(W_1(\mathcal E_1))\subseteq W_1(\mathcal E_1)$ and $T_2^*(W_1(\mathcal E_1))\subseteq W_1(\mathcal E_1).$ The containment $\mathcal E_2\cap W_1(\mathcal E_1)\subseteq W_1(\mathcal E_1)\ominus T_2(W_1(\mathcal E_1))$ follows easily from the definition of $\mathcal E_2.$ To complete the proof of the first equality of the lemma, let $x\in W_1(\mathcal E_1)$ be such that $x$ is orthogonal to $T_2(W_1(\mathcal E_1)).$ Note that for any $y\in W_1(\mathcal E_1),$ $ \langle T_2^*x, y \rangle=\langle x, T_2y \rangle = 0.$ Therefore, we get that $T_2^*x=0$. That is, $x\in\mathcal E_2$ and hence $x\in \mathcal E_2\cap W_1(\mathcal E_1).$  

Since $\mathcal{E}_2$ is $T_1$-reducing, note that $T_1^m(\mathcal{E}_1 \cap \mathcal{E}_2) \subseteq \mathcal{E}_2$ for each $m\geq 0.$ One also  trivially gets that $T_1^m(\mathcal{E}_1 \cap \mathcal{E}_2) \subseteq W_1(\mathcal{E}_1)$. Hence $W_1(\mathcal E_1\cap \mathcal E_2)\subseteq \mathcal E_2\cap W_1(\mathcal E_1).$ To see the reverse containment, let $x\in \mathcal E_2\cap W_1(\mathcal E_1).$ Then, $x\in\mathcal E_2$ and $x$ is the limit of the elements of the form $x^{(n)}= \sum_{k=0}^n T_1^k x_k$, where $x_k \in \mathcal{E}_1$. 
Since $\mathcal{E}_2$ is $T_1$-reducing, we have $L_1^nx^{(n)}=x_n \in \mathcal{E}_2$. Hence $x^{(n)} - x_k$ and therefore $x - x_k$ are in $\mathcal{E}_2$ for each $k\in\mathbb Z_+$. Once again using the fact that $\mathcal{E}_2$ is $T_1$-reducing, we get $L_1^{n-1}(x-T_1^nx_n)=x_{n-1}\in\mathcal E_2$. Continuing this way, we obtain $x_k \in \mathcal{E}_2, ~\forall k.$ This completes the proof of the lemma.
\end{proof}
For a pair of bounded operators $(T_1, T_2),$ set the notations $\mathcal{H}_{01}:=\bigvee_{n \geq 0}T_2^n(\bigcap_{m \geq 0}T_1^m(\mathcal{E}_2))$ and $\mathcal{H}_{11}=\bigvee_{m,n \geq 0}T_1^mT_2^n(\mathcal{E}_1 \cap \mathcal{E}_2)$. In the following theorem, the notations $\mathcal H_{00}$ and $\mathcal H_{10}$ are same as defined in \eqref{H-00 and H-10}.
\begin{theorem}\label{wold-type decomposition for pair}
    Let $(T_1, T_2)$ be a left-inverse commuting  pair of operators defined on $\mathcal{H}.$ Suppose $T_1$ and $T_2$ satisfy the Wold-type decomposition. Then the pair $(T_1,T_2)$ has the joint Wold-type decomposition given by \[\mathcal{H}=\mathcal{H}_{00}\oplus \mathcal{H}_{01} \oplus \mathcal{H}_{10}\oplus \mathcal{H}_{11},\] where for each $i,j=0,1,$ the subspaces $\mathcal H_{ij}$ are $T_1$-reducing and $T_2$-reducing both, and  $T_1$ acts unitarily on $\mathcal{H}_{00},\mathcal{H}_{01}$, while $T_2$ acts unitarily on $\mathcal{H}_{00},\mathcal{H}_{10}.$
\end{theorem}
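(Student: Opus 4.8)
The plan is to obtain the four-piece decomposition as a two-stage refinement: first split $\mathcal{H}$ by the Wold-type decomposition of $T_1$, and then split each resulting summand according to the action of $T_2$. Since $T_1$ admits a Wold-type decomposition, I would start from $\mathcal{H}=\mathcal{H}_\infty(T_1)\oplus \mathcal{W}_{T_1}(\mathcal{E}_1)$, where both summands are $T_1$-reducing by definition and are $T_2$-reducing by Lemma \ref{H-1 and W-1 are T-2 reducing}. It then suffices to decompose $T_2$ on each of these two $T_i$-reducing pieces and to match the resulting summands with $\mathcal{H}_{00},\mathcal{H}_{01}$ inside $\mathcal{H}_\infty(T_1)$ and with $\mathcal{H}_{10},\mathcal{H}_{11}$ inside $\mathcal{W}_{T_1}(\mathcal{E}_1)$.

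For the summand $\mathcal{H}_\infty(T_1)$, I would take the $T_2$-unitary part to be $\mathcal{H}_{00}=\bigcap_{n\geq 0}T_2^n(\mathcal{H}_\infty(T_1))$, the identity used in Lemma \ref{analyticity of T_2}. By Lemma \ref{reducing}, $\mathcal{H}_{00}$ is $T_1$- and $T_2$-reducing, and since $\mathcal{H}_{00}\subseteq \mathcal{H}_\infty(T_2)$ the restriction $T_2|_{\mathcal{H}_{00}}$ is unitary. On the $T_2$-reducing complement $\mathcal{H}_\infty(T_1)\ominus\mathcal{H}_{00}$, write $S$ for the restriction of $T_2$; Lemma \ref{analyticity of T_2} gives that $S$ is analytic. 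To promote analyticity to the wandering subspace property I would invoke Theorem \ref{thm_1}, for which the remaining hypothesis is that $\mathcal{H}_\infty(S')$ be reducing for the Cauchy dual $S'$. This is verified exactly as in the corresponding step of the proof of Lemma \ref{reducing}: for $x\in\mathcal{H}_\infty(S')$ one computes, using that the adjoint of $S'$ agrees with $L_2$ on the subspace, the identity $L_2T_2=I$, and the inclusion $\mathcal{H}_\infty(S')\subseteq \mathcal{H}_\infty(T_2')=\mathcal{H}_\infty(T_2)$ (Theorem \ref{thm_2}) on which $T_2$ is unitary, that $(S')^{*}x$ again lies in $\mathcal{H}_\infty(S')$. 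With the wandering subspace property in hand, Lemma \ref{orthogonal complement expression} identifies the $T_2$-wandering subspace of $\mathcal{H}_\infty(T_1)$ as $\bigcap_{m\geq 0}T_1^m(\mathcal{E}_2)$; since $T_2$ is unitary on $\mathcal{H}_{00}$ this is also the wandering subspace of $S$, so $\mathcal{H}_\infty(T_1)\ominus\mathcal{H}_{00}=\mathcal{W}_{T_2}\!\big(\bigcap_{m\geq 0}T_1^m(\mathcal{E}_2)\big)=\mathcal{H}_{01}$ and $\mathcal{H}_\infty(T_1)=\mathcal{H}_{00}\oplus\mathcal{H}_{01}$.

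The summand $\mathcal{W}_{T_1}(\mathcal{E}_1)$ is handled by the parallel bookkeeping: its $T_2$-unitary part is $\mathcal{H}_{10}=\bigcap_{n\geq 0}T_2^n(\mathcal{W}_{T_1}(\mathcal{E}_1))$, which is $T_1$- and $T_2$-reducing and on which $T_2$ is unitary because $\mathcal{H}_{10}\subseteq\mathcal{H}_\infty(T_2)$, while the computation already carried out in the proof of Lemma \ref{reducing} shows that the complementary piece $\mathcal{H}_{10}^{\perp}$, taken inside $\mathcal{W}_{T_1}(\mathcal{E}_1)$, carries the $T_2$-wandering subspace property and equals $\bigvee_{m,n\geq 0}T_1^mT_2^n(\mathcal{E}_1\cap\mathcal{E}_2)=\mathcal{H}_{11}$. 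Hence $\mathcal{W}_{T_1}(\mathcal{E}_1)=\mathcal{H}_{10}\oplus\mathcal{H}_{11}$, and assembling the two stages yields $\mathcal{H}=\mathcal{H}_{00}\oplus\mathcal{H}_{01}\oplus\mathcal{H}_{10}\oplus\mathcal{H}_{11}$.

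Finally I would record the reducing and unitarity claims. The pieces $\mathcal{H}_{00}$ and $\mathcal{H}_{10}$ are $T_1$- and $T_2$-reducing by Lemma \ref{reducing}; since $\mathcal{H}_{01}=\mathcal{H}_\infty(T_1)\ominus\mathcal{H}_{00}$ and $\mathcal{H}_{11}=\mathcal{W}_{T_1}(\mathcal{E}_1)\ominus\mathcal{H}_{10}$ are orthogonal complements of reducing subspaces inside reducing subspaces, they too are reducing for both operators. Unitarity follows from the containments: $\mathcal{H}_{00},\mathcal{H}_{01}\subseteq\mathcal{H}_\infty(T_1)$ forces $T_1$ to act unitarily there, and $\mathcal{H}_{00},\mathcal{H}_{10}\subseteq\mathcal{H}_\infty(T_2)$ forces $T_2$ to act unitarily there, each being the restriction of a unitary to a reducing subspace. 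I expect the only genuine obstacle to be the verification of the wandering subspace property on $\mathcal{H}_\infty(T_1)\ominus\mathcal{H}_{00}$ through Theorem \ref{thm_1}, namely checking that the Cauchy-dual hyper-range is reducing; once this is in place, everything else is bookkeeping built on Lemmas \ref{H-1 and W-1 are T-2 reducing}, \ref{reducing}, \ref{analyticity of T_2}, and \ref{orthogonal complement expression}.
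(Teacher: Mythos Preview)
Your proposal is correct and follows essentially the same route as the paper: a two-stage refinement starting from the Wold-type decomposition of $T_1$, then splitting each summand via $T_2$ by first isolating the $T_2$-hyper-range piece (using Lemma \ref{reducing}), showing analyticity of $T_2$ on the complement (Lemma \ref{analyticity of T_2}), verifying the Cauchy-dual hyper-range is $T_2'$-reducing so that Theorem \ref{thm_1} applies, and identifying the wandering subspace via Lemma \ref{orthogonal complement expression}. The paper carries out exactly this argument, including the explicit computation you anticipate for the $T_2'$-reducing step.
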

\begin{proof}
    Since $T_1$ has Wold-type decomposition, $\mathcal H=\mathcal H_\infty(T_1)\oplus \mathcal W_{T_1}(\mathcal E_1),$
    where $\mathcal{H}_\infty(T_1)$ is $T_1$-reducing and $T_1\vert_{\mathcal H_\infty(T_1)}$ is unitary.
    Note here that both the subspaces $\mathcal{H}_\infty(T_1)$ and $\mathcal W_{T_1}(\mathcal E_1)$ are $T_2$-invariant and hence are $T_2$-reducing.
    From Lemma \ref{reducing}, it follows  that $\mathcal{H}_{00}$ reduces both $T_1$ and $T_2$.
     Since  $\mathcal{H}_{00} \subseteq \mathcal{H}_{\infty}(T_i)$ for $i=1,2$ and $T_1$ and $T_2$ both have the Wold-type decomposition, it follows that $T_1$ and $T_2$ both act unitarily on $\mathcal{H}_{00}.$  
     From Lemma \ref{analyticity of T_2}, we get that $\bigcap_{n \geq 0}T_2^n(\mathcal{H}_{\infty}(T_1) \ominus \mathcal{H}_{00})=\{0\}$, thereby proving the analyticity of $T_2$ on $\mathcal{H}_{\infty}(T_1) \ominus \mathcal{H}_{00}.$ 
     Note that $\bigcap_{n \geq 0}T_2^{'n}(\mathcal{H}_{\infty}(T_1) \ominus \mathcal{H}_{00})$ is $T_2'$-invariant. From Theorem \ref{thm_2}, we obtain  $$\bigcap_{n \geq 0}T_2^{'n}(\mathcal{H}_{\infty}(T_1) \ominus \mathcal{H}_{00}) \subseteq \bigcap_{n \geq 0}T_2^{'n}\mathcal{H}=\bigcap_{n \geq 0}T_2^n\mathcal{H}.$$ 
     Since $T_2 \vert_{\mathcal H_\infty(T_2)}$ is unitary, therefore for $x \in \bigcap_{n \geq 0}T_2^{'n}(\mathcal{H}_{\infty}(T_1) \ominus \mathcal{H}_{00})$, we get
\begin{align*}
    T_2'^*x = L_2x= L_2T_2T_2^*x = T_2^*x = T_2^* T_2'^nx_n = T_2'^{n-1} x_n \in \bigcap_{n \geq 0}T_2^{'n}(\mathcal{H}_{\infty}(T_1) \ominus \mathcal{H}_{00}),
\end{align*}
    which shows that $\bigcap_{n \geq 0}T_2^{'n}(\mathcal{H}_\infty(T_1)\ominus \mathcal{H}_{00})$ is $T_2'$-reducing.
    Thus from Theorem \ref{thm_1}, we get 
    \[\mathcal{H}_\infty(T_1)\ominus \mathcal{H}_{00}=\mathcal W_2(\mathcal{H}_\infty(T_1) \ominus \mathcal{H}_{00}).\]
    Therefore, it follows that
    \begin{align}\label{subspace_1}
    \mathcal{H}_\infty(T_1) = \mathcal{H}_{00} \oplus \mathcal W_2(\mathcal{H}_{0}^{(1)}\ominus T_2(\mathcal{H}_{0}^{(1)}))
    = \mathcal H_{00} \oplus \mathcal W_2(\bigcap_{m \geq 0} T_1^m(\mathcal{E}_2))
\end{align}
    where the second equality follows from Lemma \ref{orthogonal complement expression}.

   Next, we consider the subspace  $\mathcal{H}_{01}.$ It can be seen that $\mathcal{H}_{01}$ is both $T_1$-reducing and $T_2$-reducing.  It is noteworthy that
\begin{align*}
    \bigvee_{n \geq 0} T_2^n (\bigcap_{m \geq 0} T_1^m(\mathcal{E}_2)) = \left(\bigvee_{n \geq 0} T_2^n(\mathcal{E}_2)\right) \cap \left(\bigcap_{m \geq 0} T_1^m \mathcal{H}\right).
\end{align*}
   Consequently, $T_1$ is unitary on $\mathcal{H}_{01}$ as well.    
   Similarly, we get
   \begin{align}\label{subspace_2}
   \mathcal W_1(\mathcal{E}_1)  &= \bigcap_{n \geq 0}T_2^n\left(\mathcal W_1(\mathcal{E}_1)\right) \oplus \mathcal W_2\left(\mathcal W_1(\mathcal{E}_1)  \ominus T_2(\mathcal W_1(\mathcal{E}_1)) \right) \nonumber \\
   &= \bigcap_{n \geq 0}T_2^n\left(\mathcal W_1(\mathcal{E}_1)\right) \oplus \mathcal W_{12}(\mathcal{E}_1 \cap \mathcal{E}_2).
\end{align}
  Clearly $\mathcal{H}_{10}$ and $\mathcal{H}_{11}$ are closed subspaces that reduce both $T_1$ and $T_2$. As mentioned above here also we have
\begin{align*}
    \bigcap_{n \geq 0}T_2^n\left(\bigvee_{m \geq 0}T_1^m(\mathcal{E}_1)\right) &=\left( \bigcap_{n \geq 0}T_2^n \mathcal{H}\right) \cap \left(\bigvee_{m \geq 0} T_1^m(\mathcal{E}_1) \right).
\end{align*}
    This implies that $T_2$ on $\mathcal{H}_{10}$ is unitary and  $(T_1,T_2)$ has the joint wandering subspace property on $\mathcal{H}_{11}$.
    Therefore combining \eqref{subspace_1} and \eqref{subspace_2}, we get the following;
\begin{align*}
    \mathcal{H} = \mathcal{H}_{\infty}(T_1) \oplus \mathcal W_{T_1}(\mathcal E_1) 
     = \mathcal{H}_{00} \oplus \mathcal{H}_{01} \oplus \mathcal{H}_{10} \oplus \mathcal{H}_{11},
\end{align*}
    where $T_1$ is unitary on $\mathcal{H}_{00},$ $\mathcal{H}_{01}$ and $T_2$ is unitary on $\mathcal{H}_{00},\mathcal{H}_{10}.$
\end{proof} 
For a commuting $n$-tuple $\boldsymbol{T}:=(T_1,\ldots,T_n)$ of bounded linear operators, we choose to write 
$\boldsymbol{T}^{\boldsymbol m}=T_1^{m_1}\cdots T_n^{m_n}$ for any
$\boldsymbol{m}=(m_1,\ldots,m_n)\in \mathbb Z_{+}^n$, where we adopt the convention $S^0=I$ for any operator $S$. For any
$\boldsymbol{\alpha}=(\alpha_1,\ldots,\alpha_n)\in \{0,1\}^n$, let $\hat{\boldsymbol\alpha}$ be the unique element in $\{0,1\}^n$ such that $\boldsymbol\alpha + \hat{\boldsymbol\alpha}=\boldsymbol 1,$ where $\boldsymbol{1}$ denotes the vector $(1,\ldots, 1)$ in $\{0,1\}^n.$ For any
$\boldsymbol{\alpha}=(\alpha_1,\ldots,\alpha_n)\in \{0,1\}^n$ and $\boldsymbol m=(m_1,\ldots, m_n)\in \mathbb{Z}_{+}^n$, define $\boldsymbol \alpha \circ \boldsymbol m:=(\alpha_1 m_1,\ldots, \alpha_n m_n).$ 
For any
$\boldsymbol{\alpha}=(\alpha_1,\ldots,\alpha_n)\in \{0,1\}^n,$ we shall denote $\{x\in\mathcal H:x\in\ker T_i \mbox{ if }\alpha_i=1\}$ 
by $\mathcal E_{\boldsymbol \alpha}$ with the convention that $\mathcal E_{0}=\mathcal H.$ 
The following result is a multivariate generalization of Theorem \ref{wold-type decomposition for pair}. 
\begin{theorem}\label{wold-type decomposition}
    Let $\boldsymbol{T}=(T_1,\ldots,T_n)$ be a left-inverse commuting $n$-tuple of bounded linear  operators defined on $\mathcal{H}$ such that for each $1\leq i \leq n,$ $T_i$ satisfies the Wold-type decomposition. Then  the $n$-tuple $\boldsymbol{T}$ has the joint Wold-type decomposition such that $\mathcal{H}=\oplus_{\boldsymbol{\alpha}\in \{0,1\}^n}~ \mathcal{H}_{\boldsymbol{\alpha}},$  
    where for each $\boldsymbol{\alpha} \in \{0,1\}^n,$
    \begin{eqnarray}\label{H-alpha}
        \mathcal{H}_{\boldsymbol{\alpha}} = \bigvee_{\boldsymbol{m}\in \mathbb{Z}_{+}^n} \boldsymbol{T}^{\boldsymbol{\alpha}\circ \boldsymbol{m}}\big(\bigcap_{\boldsymbol{m}\in \mathbb{Z}_{+}^{n}} \boldsymbol{T}^{\boldsymbol{\hat{\alpha}}\circ\boldsymbol{m}} \mathcal{E}_{\boldsymbol{\alpha}}\big)
    \end{eqnarray}
    is a joint $\boldsymbol{T}$-reducing subspace with $T_i$ on $\mathcal{H}_{\boldsymbol{\alpha}}$ is unitary if $\alpha_i=0$. 
\end{theorem}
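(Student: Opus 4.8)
The plan is to induct on $n$, with Theorem~\ref{wold-type decomposition for pair} serving as the base case $n=2$ (the case $n=1$ being precisely the hypothesis that each $T_i$ admits a Wold-type decomposition). So fix $n\geq 3$, assume the statement for every left-inverse commuting $(n-1)$-tuple whose entries individually admit Wold-type decompositions, and apply it to $(T_1,\ldots,T_{n-1})$. This is legitimate because any sub-tuple of a left-inverse commuting tuple is again left-inverse commuting, the relations $L_iT_j=T_jL_i$ ($i\neq j$) being inherited. The inductive hypothesis produces $\mathcal H=\bigoplus_{\boldsymbol\beta\in\{0,1\}^{n-1}}\mathcal K_{\boldsymbol\beta}$, where each $\mathcal K_{\boldsymbol\beta}$ is the $(n-1)$-variable instance of \eqref{H-alpha}, is jointly $(T_1,\ldots,T_{n-1})$-reducing, and carries $T_i$ unitarily whenever $\beta_i=0$.

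The first step is to show that every $\mathcal K_{\boldsymbol\beta}$ is in addition $T_n$-reducing. By Proposition~\ref{proposition_1} each $\mathcal E_i=\ker T_i^*$ is $T_n$-reducing, hence so is the finite intersection $\mathcal E_{\boldsymbol\beta}$; since $T_n$ commutes with every $T_i$ and $L_n$ commutes with every $T_i$ for $i<n$, forming the spans $\boldsymbol T^{\boldsymbol\beta\circ\boldsymbol m}(\cdot)$ and intersections $\bigcap\boldsymbol T^{\hat{\boldsymbol\beta}\circ\boldsymbol m}(\cdot)$ preserves $T_n$-invariance, while the $L_n$-and-projection device of Lemma~\ref{reducing} upgrades invariance to reducibility. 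This is the $n$-variable reformulation of Lemmas~\ref{H-1 and W-1 are T-2 reducing} and~\ref{reducing}, and the same symmetric use of the left-inverse commuting relations will later show that the final summands reduce every $T_i$ simultaneously.

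Next, for each fixed $\boldsymbol\beta$ I would run Shimorin's machinery on the reducing subspace $\mathcal K_{\boldsymbol\beta}$ to split it by $T_n$. Put $\mathcal K_{\boldsymbol\beta,\infty}:=\bigcap_{k\geq 0}T_n^k\mathcal K_{\boldsymbol\beta}$. Exactly as in Lemma~\ref{analyticity of T_2}, $T_n$ is analytic on $\mathcal K_{\boldsymbol\beta}\ominus\mathcal K_{\boldsymbol\beta,\infty}$: this piece lies inside $\mathcal K_{\boldsymbol\beta}$, so its $T_n$-hyperrange lies in $\mathcal K_{\boldsymbol\beta,\infty}$, yet is orthogonal to the reducing subspace $\mathcal K_{\boldsymbol\beta,\infty}$ and is therefore trivial. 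One then checks that $\bigcap_{k\geq 0}(T_n')^k(\mathcal K_{\boldsymbol\beta}\ominus\mathcal K_{\boldsymbol\beta,\infty})$ is $T_n'$-reducing, via the identity $(T_n')^*=L_n$ and the fact that $T_n$ is unitary on $\mathcal H_\infty(T_n)$, which is the very computation carried out in the proof of Theorem~\ref{wold-type decomposition for pair} and supported by Theorem~\ref{thm_2}. Shimorin's criterion, Theorem~\ref{thm_1}, then yields the wandering subspace property, so $\mathcal K_{\boldsymbol\beta}=\mathcal K_{\boldsymbol\beta,\infty}\oplus\mathcal W_{T_n}\big(\mathcal K_{\boldsymbol\beta}\ominus T_n(\mathcal K_{\boldsymbol\beta})\big)$ with $T_n$ unitary on the reducing subspace $\mathcal K_{\boldsymbol\beta,\infty}\subseteq\mathcal H_\infty(T_n)$ and a shift on the complement.

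Finally I would assemble the decomposition by setting $\mathcal H_{(\boldsymbol\beta,0)}:=\mathcal K_{\boldsymbol\beta,\infty}$ and $\mathcal H_{(\boldsymbol\beta,1)}:=\mathcal K_{\boldsymbol\beta}\ominus\mathcal K_{\boldsymbol\beta,\infty}$ for each $\boldsymbol\beta$, giving the required $2^n$ summands indexed by $\boldsymbol\alpha=(\boldsymbol\beta,\alpha_n)$; then $T_i$ is unitary on $\mathcal H_{\boldsymbol\alpha}$ exactly when $\alpha_i=0$, the coordinates $i<n$ being inherited from $\mathcal K_{\boldsymbol\beta}$ and the coordinate $n$ coming from the split just produced. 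Identifying these subspaces with the closed form \eqref{H-alpha} uses the intersection-span interchange identities of the type $\bigcap_k T_n^k\big(\bigvee_{\boldsymbol m}\boldsymbol T^{\boldsymbol\beta\circ\boldsymbol m}S\big)=\big(\bigcap_k T_n^k\mathcal H\big)\cap\big(\bigvee_{\boldsymbol m}\boldsymbol T^{\boldsymbol\beta\circ\boldsymbol m}S\big)$ already exploited in the pair theorem, together with Lemma~\ref{orthogonal complement expression} to pin down the wandering subspace as $\bigcap_{\boldsymbol m}\boldsymbol T^{\hat{\boldsymbol\alpha}\circ\boldsymbol m}(\mathcal E_{\boldsymbol\alpha})$. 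I expect the main obstacle to be the third step: re-establishing inside each reducing summand $\mathcal K_{\boldsymbol\beta}$ that $T_n$ admits a genuine Wold-type decomposition, since both the analyticity and the $T_n'$-reducibility of the Cauchy-dual hyperrange must be re-derived there before Theorem~\ref{thm_1} applies; by comparison the concluding identification with \eqref{H-alpha}, though the most calculation-heavy, is routine bookkeeping.
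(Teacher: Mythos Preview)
Your proposal is correct and follows essentially the same approach as the paper: induction on $n$ with Theorem~\ref{wold-type decomposition for pair} as the base case, then splitting each inductively obtained summand $\mathcal K_{\boldsymbol\beta}$ along $T_n$ by rerunning the Shimorin argument of Theorem~\ref{wold-type decomposition for pair}. The paper's own proof is extremely terse (it only asserts that the $\mathcal H_{\boldsymbol\alpha}$ are $T_{k+1}$-invariant and then says ``proceeding as in the proof of Theorem~\ref{wold-type decomposition for pair}''), so your write-up actually supplies more of the details---in particular the $T_n$-reducibility of the $\mathcal K_{\boldsymbol\beta}$ and the verification that $T_n$ inherits a Wold-type decomposition on each summand---than the paper does.
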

\begin{proof}
    We prove this using the method of mathematical induction. The case when $n=2$ has been dealt with in Theorem \ref{wold-type decomposition for pair}. Suppose the result is true for any $k < n$. Let $(T_1,\ldots, T_k, T_{k+1})$ be any $(k+1)$-tuple of commuting operators satisfying the hypotheses of the proposed theorem. By the  induction assumption, 
     we have $2^k$ joint $(T_1,\ldots,T_k)$-reducing subspaces such that 
\begin{align*}    \mathcal{H}=\oplus_{\boldsymbol{\alpha}\in \mathbb{Z}_2^k} ~\mathcal{H}_{\boldsymbol{\alpha}},
\end{align*}
where $\mathcal H_{\boldsymbol\alpha}$ is as given in \eqref{H-alpha}.   
    It can be easily seen that all these subspaces are $T_{k+1}$-invariant. Hence, proceeding as in the proof of Theorem \ref{wold-type decomposition for pair}, we obtain that 
    \begin{eqnarray*}
        \mathcal H_{\boldsymbol\alpha} &=& \bigvee_{m\in \mathbb Z_+^{k+1}} \boldsymbol T^{(\boldsymbol \alpha, 1)\circ \boldsymbol m}\Big(\bigcap_{m\in \mathbb Z_+^{k+1}}\boldsymbol T^{(\hat{\boldsymbol \alpha},0)\circ \boldsymbol m}(\mathcal E_{(\boldsymbol \alpha, 1)})\Big) \oplus \bigvee_{m\in \mathbb Z_+^{k+1}} \boldsymbol T^{(\boldsymbol \alpha, 0)\circ \boldsymbol m}\Big(\bigcap_{m\in \mathbb Z_+^{k+1}}\boldsymbol T^{(\hat{\boldsymbol \alpha},1)\circ \boldsymbol m}(\mathcal E_{\boldsymbol \alpha})\Big)\\
        &=:& \mathcal H_{(\boldsymbol\alpha,1)}\oplus \mathcal H_{(\boldsymbol\alpha,0)}.
    \end{eqnarray*}
Therefore from the $2^k$ subspaces of the induction hypothesis, we get $2^{k+1}$ subspaces in total which are joint $(T_1,\ldots,T_{k+1})$-reducing. 
Note here that, as proved in Theorem \ref{wold-type decomposition for pair}, we get that $T_{k+1}$ is unitary on $\mathcal{H}_{(\alpha,0)}$. 
 This completes the proof. 
\end{proof}
Theorem \ref{wold-type decomposition}, in particular, proves \cite[Theorem 3.1]{JS} and \cite[Theorem 3]{Slocinski}.
\begin{rem}
    If $\boldsymbol T=(T_1,\ldots, T_n)$ is a $n$-tuple of left-inverse commuting operators, then $\boldsymbol {T'}:=(T_1',\ldots, T_n')$ is also a left-inverse commuting tuple.
\end{rem}
This remark, along with Theorem \ref{thm_2} proves the following multi-variable generalization of the Corollary 2.9 in \cite{Shimorin}. 
\begin{theorem}
    Let $(T_1, \ldots, T_n)$ be an $n$-tuple of left-inverse commuting operators on $\mathcal{H}$. Furthermore, assume that each $T_i$ has the Wold-type decomposition for $i=1,\ldots, n$. Then the $n$-tuple $(T_1', \cdots, T_n')$ has the Wold-type decomposition.  
\end{theorem}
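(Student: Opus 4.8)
The plan is to verify that the tuple of Cauchy duals $\boldsymbol{T}'=(T_1',\ldots,T_n')$ satisfies both hypotheses of Theorem \ref{wold-type decomposition} and then to invoke that theorem verbatim. In other words, the whole argument reduces to transferring the two standing assumptions on $\boldsymbol{T}$ --- namely, left-inverse commutativity of the tuple and individual Wold-type decomposability of each component --- across the passage $T_i \mapsto T_i'$ to the Cauchy duals.

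First I would record that $\boldsymbol{T}'=(T_1',\ldots,T_n')$ is again a left-inverse commuting $n$-tuple. This is exactly the content of the preceding Remark: each $T_i'=T_i(T_i^*T_i)^{-1}$ is itself left-invertible, and the defining relations $L_iT_j=T_jL_i$ ($1\le i\ne j\le n$) survive the passage to Cauchy duals, so the first hypothesis of Theorem \ref{wold-type decomposition} holds for $\boldsymbol{T}'$. Next I would apply Theorem \ref{thm_2} (Shimorin's Corollary 2.9) componentwise: for each $i$, the assumption that $T_i$ admits a Wold-type decomposition is \emph{equivalent} to $T_i'$ admitting one. Hence every $T_i'$ satisfies the Wold-type decomposition, which is precisely the second hypothesis of Theorem \ref{wold-type decomposition} applied to the tuple $\boldsymbol{T}'$.

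With both hypotheses in place, Theorem \ref{wold-type decomposition} applies directly to $\boldsymbol{T}'$ and yields the joint Wold-type decomposition $\mathcal{H}=\bigoplus_{\boldsymbol{\alpha}\in\{0,1\}^n}\mathcal{H}_{\boldsymbol{\alpha}}'$, where each $\mathcal{H}_{\boldsymbol{\alpha}}'$ is defined as in \eqref{H-alpha} with $\boldsymbol{T}$ replaced by $\boldsymbol{T}'$, each summand is joint $\boldsymbol{T}'$-reducing, and $T_i'$ acts unitarily on $\mathcal{H}_{\boldsymbol{\alpha}}'$ whenever $\alpha_i=0$. I do not expect a genuine obstacle here: the statement is a formal consequence of pairing the single-variable equivalence of Theorem \ref{thm_2} with the multivariable decomposition of Theorem \ref{wold-type decomposition}. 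The only point demanding care is that forming Cauchy duals preserves the left-inverse commuting structure --- and this is exactly what the preceding Remark guarantees --- so once that verification is granted, the hypotheses align and the conclusion is immediate.
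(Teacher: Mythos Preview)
Your proposal is correct and follows exactly the paper's approach: the paper states the theorem immediately after the Remark that Cauchy duals preserve the left-inverse commuting property, with the single sentence ``This remark, along with Theorem~\ref{thm_2} proves the following multi-variable generalization of the Corollary 2.9 in \cite{Shimorin}.'' Your write-up merely makes explicit the (implicit) final step of invoking Theorem~\ref{wold-type decomposition} once the two hypotheses have been transferred to $\boldsymbol{T}'$.
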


Following \cite{SSS}, a commuting pair of operators $(T_1,T_2)$ is called a toral $2$-isometry if
 \begin{align}\label{Toral 2-isometry}
     I-T_i^*T_i-T_j^*T_j+T_j^*T_i^*T_iT_j = 0, \qquad i,j=1,2.
 \end{align}   
    The following proposition plays a central role in proving Theorem \ref{Model Theorem - Rank 1}, where we show that the operators $(M_{z_1}, M_{z_2})$ on $\mathcal{D}(\mu_1, \mu_2)$ as defined in \cite{SSS} become a model for the class of left-inverse commuting cyclic analytic toral $2$-isometric pair of bounded linear operators. It is worth noting that while Proposition \ref{inner product relation} is generally required as an assumption to prove \cite [Lemma 6.1(ii)] {SSS}, the result follows easily for the class of left-inverse commuting operators, and is stated below. Therefore, it turns out that Theorem \ref{Model Theorem - Rank 1} is certainly  a significant improvement over \cite[Theorem 2.4]{SSS}.
\begin{prop}\label{inner product relation}
    Let $(T_1,T_2)$
 be a left-inverse commuting pair of operators defined on a Hilbert space $\mathcal{H}$. Then for $x_0 \in ker T_1^* \cap kerT_2^*,$ we have the following:
 \begin{align*}
     &\langle T_1^m x_0, T_1^p T_2^q x_0 \rangle =0, ~~~q \geq 1, m,p \geq 0, \\
     &\langle T_2^n x_0, T_1^p T_2^q x_0 \rangle =0, ~~~ p \geq 1, n,q \geq 0.
 \end{align*}
 \end{prop}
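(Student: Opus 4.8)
The plan is to reduce the entire statement to Proposition \ref{proposition_1} together with the commutativity of $T_1$ and $T_2$. The decisive observation is that, since $\ker T_2^*$ is $T_1$-reducing and $\ker T_1^*$ is $T_2$-reducing by Proposition \ref{proposition_1}, the membership $x_0 \in \ker T_1^* \cap \ker T_2^*$ propagates under the action of $T_1$ and $T_2$ respectively. Concretely, because $x_0 \in \ker T_2^*$ and $\ker T_2^*$ is invariant under $T_1$, one obtains $T_1^m x_0 \in \ker T_2^*$, i.e. $T_2^* T_1^m x_0 = 0$ for every $m \geq 0$; symmetrically $T_1^* T_2^n x_0 = 0$ for every $n \geq 0$. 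I would record these two facts first, by a one-line induction on $m$ (resp. $n$) starting from $x_0 \in \ker T_2^*$ (resp. $x_0 \in \ker T_1^*$) and using only the $T_1$-invariance (resp. $T_2$-invariance) supplied by Proposition \ref{proposition_1}.

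With these in hand, the first family of identities is immediate. For $q \geq 1$ I would use $T_1 T_2 = T_2 T_1$ to pull a single copy of $T_2$ to the front, writing $T_1^p T_2^q x_0 = T_2\big( T_1^p T_2^{q-1} x_0 \big)$, and then move that $T_2$ across the inner product onto the left entry:
\begin{align*}
\langle T_1^m x_0, T_1^p T_2^q x_0 \rangle = \big\langle T_1^m x_0, T_2\big( T_1^p T_2^{q-1} x_0 \big) \big\rangle = \langle T_2^* T_1^m x_0, T_1^p T_2^{q-1} x_0 \rangle = 0,
\end{align*}
the last equality being exactly $T_2^* T_1^m x_0 = 0$. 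This works uniformly for all $m, p \geq 0$, since the powers of $T_1$ play no role once $T_2^*$ annihilates the left entry.

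The second family is handled by the mirror-image argument. For $p \geq 1$ I would instead peel off a copy of $T_1$, writing $T_1^p T_2^q x_0 = T_1\big( T_1^{p-1} T_2^q x_0 \big)$, and transfer it to the left entry to obtain $\langle T_2^n x_0, T_1^p T_2^q x_0 \rangle = \langle T_1^* T_2^n x_0, T_1^{p-1} T_2^q x_0 \rangle = 0$, now using $T_1^* T_2^n x_0 = 0$.

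There is no serious obstacle here: the only substantive input is Proposition \ref{proposition_1}, which guarantees that $\ker T_1^*$ and $\ker T_2^*$ are invariant under the complementary operator, and hence that $T_1^m x_0$ and $T_2^n x_0$ stay in the appropriate kernel. Once that invariance is recorded, both identities collapse to a single adjoint manipulation. The only point needing a little care is to ensure that a factor of the correct variable ($T_2$ for the first identity, $T_1$ for the second) is available to move across the inner product, which is precisely why the hypotheses read $q \geq 1$ and $p \geq 1$ respectively.
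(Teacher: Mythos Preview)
Your proposal is correct and is exactly the argument the paper has in mind: the paper's proof consists of the single line ``The proof follows from Proposition \ref{proposition_1},'' and your write-up simply unpacks that reference by using the $T_j$-invariance of $\ker T_i^*$ to conclude $T_2^* T_1^m x_0 = 0$ and $T_1^* T_2^n x_0 = 0$, then moving one factor of $T_2$ (resp.\ $T_1$) across the inner product.
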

 \begin{proof}
    The proof follows from Proposition \ref{proposition_1}. 
 \end{proof}
   
\begin{theorem}\label{Model Theorem - Rank 1}
    Let $\boldsymbol T=(T_1, T_2)$ be a commuting pair of operators which are left-inverse commuting on $\mathcal{H}.$ Assume $\boldsymbol{T}$ to be cyclic analytic toral $2$-isometry. Then there exist positive finite Borel measures $\mu_1,\mu_2$ on $\mathbb{T}$ such that $\boldsymbol T$ is unitarily equivalent with $(M_{z_1}, M_{z_2})$ on $\mathcal{D}(\mu_1, \mu_2)$.
\end{theorem}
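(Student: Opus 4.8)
The plan is to deduce the statement from \cite[Theorem 2.4]{SSS} by verifying its standing hypothesis, namely that $\ker T_1^*\cap\ker T_2^*$ is a wandering subspace for $(T_1,T_2)$ in the sense of \cite[Definition 1.5]{SSS}. The whole point is that the left-inverse commuting hypothesis, which is absent in \cite{SSS}, forces this wandering subspace property for free; once it is in place, the cited theorem produces the measures $\mu_1,\mu_2$ and the unitary equivalence onto $(M_{z_1},M_{z_2})$ on $\mathcal D(\mu_1,\mu_2)$. Accordingly the argument splits into two independent tasks: a spanning (wandering) statement supplied by the Wold-type machinery of Section~2, and an orthogonality statement supplied by Proposition \ref{inner product relation}.

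First I would record that each component is a single $2$-isometry: setting $i=j$ in \eqref{Toral 2-isometry} gives $I-2T_i^*T_i+T_i^{*2}T_i^2=0$. Since $T_i$ is moreover analytic, it has the wandering subspace property by \cite{Shimorin} (cf. Theorem \ref{thm_1}, whose hypothesis is met because $\mathcal H_\infty(T_i)=\{0\}$), so $\mathcal H=\mathcal W_{T_i}(\mathcal E_i)$ and in particular each $T_i$ admits a Wold-type decomposition. Hence Theorem \ref{wold-type decomposition for pair} applies to the pair, and I would collapse its four-fold decomposition to a single summand: the inclusions $\mathcal H_{00}\subseteq\mathcal H_\infty(T_1)$, $\mathcal H_{01}\subseteq\bigcap_{m\ge 0}T_1^m\mathcal H=\mathcal H_\infty(T_1)$ and $\mathcal H_{10}\subseteq\bigcap_{n\ge 0}T_2^n\mathcal H=\mathcal H_\infty(T_2)$ together with analyticity force $\mathcal H_{00}=\mathcal H_{01}=\mathcal H_{10}=\{0\}$. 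Therefore
\[
\mathcal H=\mathcal H_{11}=\bigvee_{m,n\ge 0}T_1^mT_2^n(\mathcal E_1\cap\mathcal E_2),
\]
which is exactly the joint wandering subspace property with wandering subspace $\mathcal E_1\cap\mathcal E_2=\ker T_1^*\cap\ker T_2^*$.

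Next I would bring in cyclicity to pin down the rank. Since $\mathcal E_1\cap\mathcal E_2\subseteq\mathcal E_i=(T_i\mathcal H)^\perp$, every mixed monomial $T_1^mT_2^n\xi$ with $(m,n)\neq(0,0)$ lies in $T_1\mathcal H+T_2\mathcal H$ and is thus orthogonal to $\mathcal E_1\cap\mathcal E_2$; hence for a cyclic vector $\xi$ the orthogonal projection $P_{\mathcal E_1\cap\mathcal E_2}\xi$ alone generates $\mathcal E_1\cap\mathcal E_2$, forcing $\dim(\mathcal E_1\cap\mathcal E_2)=1$. Fixing a unit vector $x_0$ spanning it, Proposition \ref{inner product relation} delivers the vanishing inner products $\langle T_1^mx_0,T_1^pT_2^qx_0\rangle=0$ and $\langle T_2^nx_0,T_1^pT_2^qx_0\rangle=0$, which separate the pure $T_1$- and $T_2$-orbits from the genuinely mixed monomials. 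Together with the spanning statement above, this is precisely the assertion that $\mathbb C x_0=\ker T_1^*\cap\ker T_2^*$ is a wandering subspace for $(T_1,T_2)$ in the sense of \cite[Definition 1.5]{SSS}, so all hypotheses of \cite[Theorem 2.4]{SSS} hold and that theorem yields the measures $\mu_1,\mu_2$ on $\mathbb T$ and the intertwining unitary onto $(M_{z_1},M_{z_2})$ on $\mathcal D(\mu_1,\mu_2)$.

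I expect the main obstacle to be the collapse/rank step rather than the invocation of \cite{SSS}: one must be certain that analyticity of the two commuting components simultaneously annihilates the three unitary-type summands $\mathcal H_{00},\mathcal H_{01},\mathcal H_{10}$, and that the surviving wandering subspace is exactly $\mathcal E_1\cap\mathcal E_2$ and not something larger. This is where the left-inverse commuting hypothesis is indispensable, since it is what makes Theorem \ref{wold-type decomposition for pair} available and, through Proposition \ref{inner product relation}, supplies the orthogonality that \cite{SSS} had to impose by hand in \cite[Lemma 6.1]{SSS}; this is the precise sense in which the present theorem identifies the exact class satisfying the hypothesis of \cite[Theorem 2.4]{SSS}.
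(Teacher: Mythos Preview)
Your proposal is correct and lands on the same key point as the paper: the left-inverse commuting hypothesis, via Proposition~\ref{inner product relation}, supplies precisely the orthogonality that \cite{SSS} had to assume, so \cite[Theorem~2.4]{SSS} applies. The execution differs mildly. For the joint wandering subspace property you route through Theorem~\ref{wold-type decomposition for pair} and collapse the three unitary summands by analyticity; the paper instead cites \cite[Theorem~1]{Richter2} for each $T_i$ and then \cite[Theorem~4.3]{MRV} directly. For the model itself you invoke \cite[Theorem~2.4]{SSS} as a black box, whereas the paper reproduces its construction by hand: it restricts each $T_i$ to $\mathcal H_i=\bigvee_{m\ge0}T_i^m x_0$, applies Richter's single-variable representation \cite[Theorem~5.1]{Richter1} to obtain $\mu_i$, defines $U(T_1^mT_2^nx_0)=z_1^mz_2^n$, and checks \eqref{unitary relation} via \cite[Lemma~6.1(ii)]{SSS} and Proposition~\ref{inner product relation}. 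Your route is shorter and makes the logical dependence on \cite{SSS} transparent; the paper's route is more self-contained and makes the measures $\mu_i$ explicit. One point where your write-up is actually sharper: you argue that cyclicity forces $\dim(\mathcal E_1\cap\mathcal E_2)=1$, a step the paper leaves implicit when it chooses $x_0\in\ker T_1^*\cap\ker T_2^*$ and later calls it ``the'' cyclic vector.
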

\begin{proof}
    Since $T_1$, $T_2$ are analytic $2$-isometries, by \cite[Theorem 1]{Richter2}, both individually has the wandering subspace property.  Hence, they have the joint wandering subspace property due to \cite[Theorem 4.3]{MRV}. Moreover, \cite[Theorem 4.3]{MRV} asserts that $ker T_1^* \cap kerT_2^*$ is the corresponding generating wandering subspace for the pair $\boldsymbol T$ on $\mathcal{H}$, where \cite[Lemma 4.1]{MRV} proves that $ker T_1^* \cap kerT_2^*$ is a non-trivial closed subspace of $\mathcal{H}$. Choose a $x_0 \in ker T_1^* \cap kerT_2^*$.    
    For each $i=1,2$,  Consider $\mathcal{H}_i := \bigvee_{m \geq 0} T_i^m x_0$,  which is a $T_i$-invariant subspace. Since $T_i$ is analytic and $2$-isometry, $T_i \vert_{\mathcal{H}_i}$ is also analytic and $2$-isometry. Therefore, $T_i \vert_{\mathcal{H}_i}$ is cyclic analytic $2$-isometry and so from \cite [Theorem 5.1]{Richter1}, there exist positive finite Borel measures $\mu_i$ on $\mathbb{T}$ and unitary maps $U_i: \mathcal{H}_i \rightarrow \mathcal{D}(\mu_i)$ given by
    \begin{align*}
      U_iT_i = M_z U_i ~~~~~  \text{and}~~~~ U_i x_0=1.
    \end{align*}
    Here $M_z$ represents the multiplication by the coordinate function $z$ on $\mathcal{D}(\mu_i)$. Define a linear map $U: \mathcal{H} \rightarrow \mathcal{D}(\mu_1,\mu_2)$ given as in \cite[Theorem 2.4]{SSS} by the rule 
    \[U(T_1^mT_2^nx_0)=z_1^m z_2^n, ~~m,n \geq 0.\]
    We claim that the map $U$ is an unitary from $\mathcal{H}$ onto $\mathcal{D}(\mu_1,\mu_2).$
    Infact, for $x \in \mathcal{H}$ given by $x=\sum_{m=0}^k \sum_{n=0}^l a_{m,n} T_1^m T_2^nx_0$, we have 
\begin{align*}
    UT_1x &=UT_1\Big(\sum_{m=0}^k \sum_{n=0}^l a_{m,n} T_1^m T_2^n x_0\Big) =U\Big(\sum_{m=0}^k \sum_{n=0}^l a_{m,n} T_1^{m+1} T_2^n x_0 \Big)\\
    &= \sum_{m=0}^k \sum_{n=0}^l a_{m,n} z_1^{m+1} z_2^n = z_1 \sum_{m=0}^k \sum_{n=0}^l a_{m,n} z_1^{m} z_2^n\\
    & 
    = M_{z_1} Ux. 
\end{align*}
    Similarly, we also have $UT_2= M_{z_2} U$.
   
    Now, we proceed to show that $U$ is indeed a unitary operator. To this end note that, since $T$ is cyclic with cyclic vector $x_0$, we have $\mathcal{H}=\bigvee_{m,n \geq 0}T_1^m T_2^n x_0$. Due to the fact that the set of polynomials is dense in $\mathcal{D}(\mu_1,\mu_2)$ (see \cite[Theorem 2.1]{SSS}), we also have $\mathcal{D}(\mu_1,\mu_2)= \bigvee_{m,n \geq 0} z_1^mz_2^n$.
    Hence, it is enough to show that, for any $m,n \in \mathbb{Z_{+}}$
\begin{align}\label{unitary relation}
    \langle T_1^m T_2^n x_0, T_1^p T_2^qx_0 \rangle = \langle z_1^m z_2^n, z_1^p z_2^q \rangle.  
\end{align}
    Consider
\begin{align*}
    \langle T_i^m x_0, T_i^n x_0 \rangle  &= \langle U_iT_i^m x_0, U_i T_i^n x_0 \rangle _{\mathcal{D}(\mu_i)} \\
    &= \langle M_z^mU_ix_0, M_z^nU_ix_0 \rangle_{\mathcal{D}(\mu_i)} \\ & = \langle z^m, z^n \rangle_{\mathcal{D}(\mu_i)} \\
    &= \langle z_i^m, z_i^n \rangle_{\mathcal{D}(\mu_1, \mu_2)}.
\end{align*}
    Since $T$ is a left-inverse commuting  toral $2$-isometry, using Proposition \ref{inner product relation}, we observe that the hypothesis of  Lemma \cite[6.1(ii)]{SSS} is satisfied. Therefore, the conclusion of \cite[Lemma 6.1(ii)]{SSS} immediately follows. This proves \eqref{unitary relation} and completes the proof.
\end{proof}

\section{Model Space $\mathcal{D}_{\mathcal{E}}(\mu_1,\mu_2)$ and Left-inverse Commuting Toral $2$-isometries }\label{Dirichlet space - vector valued}

    Let $\mathcal{E}$ be any Hilbert space and let $\mu_1,\mu_2$ be any two positive $\mathcal{L}(\mathcal{E})$-valued operator measures defined on $\mathbb{T}$. Denote the Dirichlet-type space corresponding to $\mu_1,\mu_2 \in \mathcal{L}(\mathcal{E})$  over the bidisc $\mathbb{D}^2$ by $\mathcal{D}_{\mathcal{E}}(\mu_1,\mu_2)$ and is defined by
    \[\mathcal{D}_{\mathcal{E}}(\mu_1,\mu_2):=\{f \in \mathcal{O}(\mathbb{D}^2,\mathcal{E}): D_{\mathcal{E}}(\mu_1,\mu_2)(f) < \infty \}, \]
    where $\mathcal O(\mathbb D^2, \mathcal E)$ is the set of all $\mathcal E$-valued holomorphic maps on the bidisc $\mathbb D^2$ and 
\begin{align*}
    D_{\mathcal{E}}(\mu_1,\mu_2)(f)&:=\lim_{r \rightarrow 1^-} \int_{\mathbb{D}}\int_{\mathbb{T}}\langle P_{\mu_1}(z_1)\partial_1f(z_1,re^{it}), \partial_1f(z_1, re^{it})\rangle dA(z_1) dt \\ &+  \lim_{r \rightarrow 1^-}\int_{\mathbb{D}}\int_{\mathbb{T}}\langle P_{\mu_2}(z_2)\partial_2f(re^{it},z_2), \partial_2f(re^{it},z_2)\rangle dA(z_2) dt.  
\end{align*}    
    For any $f \in \mathcal{D}_{\mathcal{E}}(\mu_1,\mu_2)$, the norm of $f$ is given by $\|f\|^2_{\mathcal{D}_{\mathcal{E}}(\mu_1, \mu_2)}:=\|f\|^2_{H^2(\mathbb{D}^2)}+ D_{\mathcal{E}}(\mu_1,\mu_2)(f).$ 
    The space $\big(\mathcal{D}_{\mathcal{E}}(\mu_1,\mu_2), \|\cdot\|_{\mathcal{D}_{\mathcal{E}}(\mu_1, \mu_2)}\big)$ becomes a Hilbert space. Corresponding inner product shall be denoted by $\langle \cdot , \cdot \rangle_{\mathcal D_{\mathcal{E}}(\mu_1,\mu_2)}.$
    The following are some of the  important properties regarding  $\mathcal{D}_{\mathcal{E}}(\mu_1,\mu_2)$. We refer the reader to \cite{SSS} for an analogous proof of these facts.
\begin{theorem}\cite [Lemma 1.3, Theorem 2.1]{SSS}
    Let $\mu_1,\mu_2$ be any two positive $\mathcal{L}(\mathcal{E})$-valued operator measures defined on a Hilbert space $\mathcal{E}$. Then the following holds:
\begin{enumerate}
    \item The space $\big(\mathcal{D}_{\mathcal{E}}(\mu_1,\mu_2), \|\cdot\|_{\mathcal{D}_{\mathcal{E}}(\mu_1, \mu_2)}\big)$ is a reproducing kernel Hilbert space.
    \item The operators of multiplication by the coordinate functions $z_1,z_2$, i.e., $M_{z_1},M_{z_2}$ are bounded linear operators on $\mathcal{D}_{\mathcal{E}}(\mu_1,\mu_2)$.
    \item The pair of operators $(M_{z_1}, M_{z_2})$ on $\mathcal{D}_{\mathcal{E}}(\mu_1, \mu_2)$ is  toral $2$-isometry, i.e., for every $f \in \mathcal{D}_{\mathcal{E}}(\mu_1, \mu_2)$ we have 
   \begin{align*}
       \|z_i z_jf\|^2_{\mathcal{D}_{\mathcal{E}}(\mu_1, \mu_2)} - \|z_if\|^2_{\mathcal{D}_{\mathcal{E}}(\mu_1, \mu_2)} -\|z_jf\|^2_{\mathcal{D}_{\mathcal{E}}(\mu_1, \mu_2)} + \|f\|^2_{\mathcal{D}_{\mathcal{E}}(\mu_1, \mu_2)} = 0, \quad i,j=1,2.
   \end{align*}
      \item The set of polynomials forms a dense subset in $\mathcal{D}_{\mathcal{E}}(\mu_1,\mu_2).$
\end{enumerate}
\end{theorem}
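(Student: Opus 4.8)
The plan is to transport the scalar arguments of \cite[Lemma 1.3, Theorem 2.1]{SSS} to the operator-valued setting, the only genuinely new feature being that the Poisson-type weights $P_{\mu_i}(z)$ are now positive elements of $\mathcal{L}(\mathcal{E})$ rather than nonnegative scalars. Positivity of $\mu_1,\mu_2$ guarantees $P_{\mu_i}(z)\ge 0$ for every $z$, so each integrand $\langle P_{\mu_i}(z)\partial_i f,\partial_i f\rangle$ is nonnegative and every quadratic form appearing below is well defined and nonnegative. For part (1), I would first note that $\|f\|^2_{H^2(\mathbb{D}^2)}\le \|f\|^2_{\mathcal{D}_{\mathcal{E}}(\mu_1,\mu_2)}$ because $D_{\mathcal{E}}(\mu_1,\mu_2)(f)\ge 0$, so the identity map $\mathcal{D}_{\mathcal{E}}(\mu_1,\mu_2)\hookrightarrow H^2(\mathbb{D}^2,\mathcal{E})$ is a contraction. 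Since $H^2(\mathbb{D}^2,\mathcal{E})$ is a reproducing kernel Hilbert space with operator kernel $\frac{1}{(1-z_1\bar w_1)(1-z_2\bar w_2)}I_{\mathcal{E}}$, the functional $f\mapsto\langle f(w),e\rangle_{\mathcal{E}}$ is bounded on $H^2$ for each $w\in\mathbb{D}^2$ and $e\in\mathcal{E}$, hence bounded on $\mathcal{D}_{\mathcal{E}}(\mu_1,\mu_2)$ by the contractive inclusion; as the latter space is already complete, it is a reproducing kernel Hilbert space.

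The heart of parts (2) and (3) is a single identity: for $i\in\{1,2\}$ and $f\in\mathcal{D}_{\mathcal{E}}(\mu_1,\mu_2)$,
\begin{equation*}
\|z_i f\|^2_{\mathcal{D}_{\mathcal{E}}(\mu_1,\mu_2)} = \|f\|^2_{\mathcal{D}_{\mathcal{E}}(\mu_1,\mu_2)} + Q_i(f),
\end{equation*}
where $Q_i(f)$ is a nonnegative boundary quadratic form built from the $\mu_i$-weighted integral of the boundary values of $f$ in the $i$-th variable (the operator analogue of the term $\int_{\mathbb{T}}|f|^2\,d\mu$ in Richter's theory). I would prove it by splitting the norm into its Hardy part, on which $M_{z_i}$ is an isometry since $|z_i|=1$ on $\mathbb{T}^2$, and the two Dirichlet terms, using $\partial_i(z_i f)=f+z_i\partial_i f$ and $\partial_j(z_i f)=z_i\partial_j f$ for $j\neq i$, and then converting the area integral $\int_{\mathbb{D}}\langle P_{\mu_i}\partial_i f,\partial_i f\rangle\,dA$ into the boundary form $Q_i$ by a Green's-type identity; operator positivity lets me interchange the $r\to 1^-$ limit with the integrations via monotone and dominated convergence. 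Boundedness in (2) is then immediate, since $Q_i(f)\le \|\mu_i(\mathbb{T})\|\,\|f\|^2_{H^2(\mathbb{D}^2)}\le \|\mu_i(\mathbb{T})\|\,\|f\|^2_{\mathcal{D}_{\mathcal{E}}(\mu_1,\mu_2)}$, giving $\|z_i f\|^2\le (1+\|\mu_i(\mathbb{T})\|)\|f\|^2$.

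For the toral relation (3), applying the identity twice yields
\begin{equation*}
\|z_i z_j f\|^2 - \|z_i f\|^2 - \|z_j f\|^2 + \|f\|^2 = Q_i(z_j f) - Q_i(f),
\end{equation*}
so it suffices to see $Q_i(z_j f)=Q_i(f)$ for all $i,j$. Because $Q_i$ depends only on the boundary values of $f$ weighted by $\mu_i$ in the $i$-th variable, multiplication by $z_j$ multiplies those boundary values by the unimodular factor $\zeta_j$ on $\mathbb{T}$ (and commutes with $\partial_i$ when $j\neq i$), leaving $Q_i$ unchanged in both the case $j=i$ and the case $j\neq i$; hence the right-hand side vanishes. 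For part (4) I would use the standard dilation-plus-truncation scheme: for $0<\rho<1$ put $f_\rho(z)=f(\rho z)$ and check, using $P_{\mu_i}(z)\ge 0$ and dominated convergence, that $\|f_\rho-f\|_{\mathcal{D}_{\mathcal{E}}(\mu_1,\mu_2)}\to 0$ as $\rho\to 1^-$; since each $f_\rho$ extends holomorphically past $\overline{\mathbb{D}^2}$, its Taylor polynomials converge to it uniformly together with their first partials on $\overline{\mathbb{D}^2}$, hence in $\mathcal{D}_{\mathcal{E}}(\mu_1,\mu_2)$, exhibiting $f$ as a norm limit of $\mathcal{E}$-valued polynomials.

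The main obstacle I anticipate is the rigorous derivation of the master identity of the second paragraph: computing the Dirichlet integral of $z_i f$, identifying the boundary form $Q_i$, and justifying the interchange of the limit $r\to 1^-$ with the area and circle integrals, all with operator-valued weights. The scalar template is in \cite{SSS}, and the positivity of the operator measures is precisely what makes the vectorization routine rather than delicate; nonetheless, establishing the boundary-value convergence and the Green's-identity step in the two-variable $\mathcal{E}$-valued setting is the one place where real work is required, with everything else reducing to it.
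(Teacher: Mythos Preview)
Your proposal is correct and aligns with the paper's approach: the paper does not give its own proof of this theorem but simply writes ``We refer the reader to \cite{SSS} for an analogous proof of these facts,'' i.e., it asserts that the scalar arguments of \cite[Lemma~1.3, Theorem~2.1]{SSS} go through verbatim once the positive scalar measures are replaced by positive $\mathcal{L}(\mathcal{E})$-valued ones. Your sketch is precisely an elaboration of what that transport entails---the contractive inclusion into $H^2(\mathbb{D}^2,\mathcal{E})$ for (1), the Richter-type identity $\|z_if\|^2=\|f\|^2+Q_i(f)$ for (2)--(3), the invariance $Q_i(z_jf)=Q_i(f)$ giving the toral relation, and the dilation argument for (4)---so you have supplied the details the paper omits rather than taken a different route.
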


    We note here the following formula regarding the inner product of monomials in $\mathcal{D}_{\mathcal{E}}(\mu_1,\mu_2)$. The following lemma can be easily verified from the definition of norm in $\mathcal{D}_{\mathcal{E}}(\mu_1,\mu_2)$ and can be proven analogously as in \cite[Proposition 3.10]{SSS}. We omit the proof.
\begin{lemma}\label{inner product - monomials}
    Suppose $\mu_1,\mu_2$ are two positive $\mathcal{L}(\mathcal{E})$-valued measures. Then, for any $x,y\in \mathcal E$ and $m,n,p,q\in\mathbb Z_+,$ we have
\[\langle xz_1^m z_2^n, yz_1^p z_2^q \rangle_{\mathcal D_{\mathcal E}(\mu_1,\mu_2)} =\begin{cases}
0, & \text{if $m \neq p$, $n \neq q$},\\
      (n \wedge q)\langle \hat{\mu_2}(q-n)x, y \rangle & \text{if $m = p, n \neq q$}, \\
       (m \wedge p)\langle \hat{\mu_1}(p-m)x, y\rangle & \text{if $m \neq p, n =q$}, \\
       1+m\langle \hat{\mu_1}(0)x,y \rangle + n \langle \hat{\mu_2}(0)x, y \rangle  & \text{if $m=p,n =q$.}
\end{cases}
\]
\end{lemma}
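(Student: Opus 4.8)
The plan is to exploit the additive structure of the norm, writing
$$\langle f, g\rangle_{\mathcal D_{\mathcal E}(\mu_1,\mu_2)} = \langle f, g\rangle_{H^2(\mathbb D^2)} + B(f,g),$$
where $B(\cdot,\cdot)$ is the sesquilinear form obtained by polarizing $D_{\mathcal E}(\mu_1,\mu_2)(\cdot)$; concretely, $B(f,g)$ is the sum of the two iterated integrals in the definition of $D_{\mathcal E}(\mu_1,\mu_2)$ with the second occurrence of $\partial_i f$ replaced by $\partial_i g$. I would then evaluate each summand separately on the monomials $f = x z_1^m z_2^n$ and $g = y z_1^p z_2^q$. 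The $H^2(\mathbb D^2)$ term is immediate: since the normalized monomials form an orthonormal system in $H^2(\mathbb D^2)$, one gets $\langle f, g\rangle_{H^2(\mathbb D^2)} = \delta_{m,p}\,\delta_{n,q}\,\langle x, y\rangle_{\mathcal E}$, which supplies the constant term recorded on the diagonal (the ``$1$'' in the fourth case) and vanishes off it.

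For the first Dirichlet summand I would substitute $\partial_1 f(z_1, re^{it}) = m\,z_1^{m-1} r^n e^{int}\,x$ and $\partial_1 g(z_1, re^{it}) = p\,z_1^{p-1} r^q e^{iqt}\,y$, so that the integrand factors as
$$\langle P_{\mu_1}(z_1)\partial_1 f,\ \partial_1 g\rangle = mp\, z_1^{m-1}\overline{z_1^{p-1}}\, r^{n+q} e^{i(n-q)t}\,\langle P_{\mu_1}(z_1) x, y\rangle.$$
Integrating first in $t$ over $\mathbb T$ forces orthogonality unless $n=q$, in which case $r^{n+q}\to 1$, and the remaining integral over $\mathbb D$ becomes
$$mp\int_{\mathbb D} z_1^{m-1}\overline{z_1^{p-1}}\,\langle P_{\mu_1}(z_1) x, y\rangle\, dA(z_1).$$
The crucial observation is that $\langle P_{\mu_1}(\cdot) x, y\rangle$ is exactly the Poisson integral of the scalar complex measure $\zeta \mapsto \langle \mu_1(\zeta) x, y\rangle$, so this integral collapses to the one-variable local Dirichlet inner product of the scalar theory. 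By the scalar identity underlying \cite[Proposition 3.10]{SSS} applied to $\langle\mu_1(\cdot)x,y\rangle$, it equals $(m\wedge p)\langle\hat{\mu_1}(p-m) x, y\rangle$ when $m\neq p$ and $m\langle\hat{\mu_1}(0) x, y\rangle$ when $m=p$. The second Dirichlet summand is handled by the symmetric argument in the variable $z_2$, contributing only when $m=p$ and yielding $(n\wedge q)\langle\hat{\mu_2}(q-n)x,y\rangle$ off the diagonal and $n\langle\hat{\mu_2}(0)x,y\rangle$ on it.

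Assembling the three contributions over the four cases reproduces the claimed formula: when $m\neq p$ and $n\neq q$ every term vanishes; when exactly one index pair agrees, only the matching Dirichlet summand survives; and when $m=p,\ n=q$ all three pieces add. The only genuinely technical point is justifying the interchange of the limit $r\to 1^-$ with the $\mathbb D$-integration and the convergence of the Poisson expansion, but on monomials everything reduces to finitely many explicit terms with $r^{n+q}\to 1$, so this presents no difficulty; the one conceptual step is the reduction of the operator-valued integral to a scalar Dirichlet computation by pairing against $x$ and $y$. For this reason I would, as the authors indicate, invoke the scalar result of \cite[Proposition 3.10]{SSS} for the measures $\langle\mu_i(\cdot)x,y\rangle$ rather than reprove it.
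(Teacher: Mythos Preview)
Your approach is correct and is precisely what the paper intends: the authors omit the proof, stating only that it ``can be easily verified from the definition of norm in $\mathcal{D}_{\mathcal{E}}(\mu_1,\mu_2)$ and can be proven analogously as in \cite[Proposition 3.10]{SSS},'' and your computation---splitting into the $H^2(\mathbb D^2)$ piece plus the two Dirichlet integrals, then reducing each operator-valued integral to the scalar case by pairing with $x,y$---is exactly that verification. One minor remark: your $H^2$ contribution is $\langle x,y\rangle_{\mathcal E}$ rather than the bare ``$1$'' printed in the fourth case, which appears to be a typographical slip in the statement.
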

    The following result generalizes Theorem \ref{Model Theorem - Rank 1} and establishes that the pair $(M_{z_1},M_{z_2})$ on $\mathcal{D}_{\mathcal{E}}(\mu_1,\mu_2)$ serves as a canonical model for the class of pairs of left-inverse commuting analytic toral $2$-isometries.

\begin{theorem}\label{Model Theorem - Higher Rank}
    Consider $\boldsymbol T=(T_1,T_2)$ to be a left-inverse commuting pair of analytic toral $2$-isometries on $\mathcal{H}$. Then there exist positive  $\mathcal{L}(\mathcal{E})$-valued operator measures $\mu_1,\mu_2$ on $\mathbb{T}$ such that $\boldsymbol T$ is unitarily equivalent with $(M_{z_1}, M_{z_2})$  on $\mathcal{D}_{\mathcal{E}}(\mu_1,\mu_2)$.
\end{theorem}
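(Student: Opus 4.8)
The plan is to follow the strategy of Theorem \ref{Model Theorem - Rank 1}, replacing the one-dimensional generating vector $x_0$ by the full joint wandering subspace and the scalar measures by $\mathcal L(\mathcal E)$-valued operator measures. First I would record the structural input. Since $T_1$ and $T_2$ are analytic toral $2$-isometries, each $T_i$ is individually an analytic $2$-isometry and hence has the wandering subspace property by \cite[Theorem 1]{Richter2}; by \cite[Theorem 4.3]{MRV} the pair then has the joint wandering subspace property with generating subspace $\mathcal E:=\ker T_1^*\cap\ker T_2^*$, so that $\mathcal H=\bigvee_{m,n\ge 0}T_1^mT_2^n(\mathcal E)$. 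This $\mathcal E$ is the Hilbert space that will index the model.

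Next I would manufacture the two operator measures. For $i=1,2$ set $\mathcal K_i:=\bigvee_{m\ge 0}T_i^m(\mathcal E)$, a $T_i$-invariant subspace on which $T_i$ restricts to an analytic $2$-isometry (analyticity is inherited since $\bigcap_m T_i^m\mathcal K_i\subseteq\bigcap_m T_i^m\mathcal H=\{0\}$) whose wandering subspace is exactly $\mathcal E$: the inclusion $\mathcal E\subseteq\mathcal K_i\ominus T_i\mathcal K_i$ is clear, and any $w\in(\mathcal K_i\ominus T_i\mathcal K_i)\ominus\mathcal E$ is orthogonal to every $T_i^m(\mathcal E)$, hence to $\mathcal K_i$, forcing $w=0$. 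Applying the operator-valued analogue of Richter's representation \cite[Theorem 5.1]{Richter1} to $T_i|_{\mathcal K_i}$ produces a positive $\mathcal L(\mathcal E)$-valued measure $\mu_i$ on $\mathbb T$ and a unitary $U_i\colon\mathcal K_i\to\mathcal D_{\mathcal E}(\mu_i)$ with $U_iT_i=M_zU_i$ and $U_i|_{\mathcal E}=\mathrm{id}_{\mathcal E}$. In particular this yields the pure-power identities $\langle T_i^mx,T_i^py\rangle_{\mathcal H}=\langle xz_i^m,yz_i^p\rangle_{\mathcal D_{\mathcal E}(\mu_1,\mu_2)}$ for all $x,y\in\mathcal E$, which match the cases of Lemma \ref{inner product - monomials} in which exactly one index varies.

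With $\mu_1,\mu_2$ in hand I would define $U\colon\mathcal H\to\mathcal D_{\mathcal E}(\mu_1,\mu_2)$ on the dense set by $U(T_1^mT_2^nx)=xz_1^mz_2^n$ for $x\in\mathcal E$ and extend linearly; the intertwining relations $UT_i=M_{z_i}U$ are then immediate exactly as in Theorem \ref{Model Theorem - Rank 1}. The crux is to verify that $U$ is isometric, i.e. that
\[\langle T_1^mT_2^nx,\;T_1^pT_2^qy\rangle_{\mathcal H}=\langle xz_1^mz_2^n,\;yz_1^pz_2^qy\rangle_{\mathcal D_{\mathcal E}(\mu_1,\mu_2)}\qquad(x,y\in\mathcal E).\]
Here the left-inverse commuting hypothesis does the essential work: Proposition \ref{proposition_1} and Proposition \ref{inner product relation}, whose proofs only use that $\ker T_i^*$ is $T_j$-reducing and therefore apply verbatim to all $x,y\in\mathcal E$, force the mixed inner products to vanish whenever both indices differ, reducing matters to the pure-power identities already secured. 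The remaining bookkeeping is precisely the content of \cite[Lemma 6.1(ii)]{SSS}, whose hypothesis is now met; I would carry out its vector-valued analogue, with scalar inner products replaced by the $\mathcal E$-inner products $\langle\hat\mu_i(\cdot)x,y\rangle$ appearing in Lemma \ref{inner product - monomials}. Since polynomials are dense in $\mathcal D_{\mathcal E}(\mu_1,\mu_2)$ and the $\boldsymbol T$-orbit of $\mathcal E$ is dense in $\mathcal H$, the isometry $U$ extends to a surjective unitary, giving $\boldsymbol T\cong(M_{z_1},M_{z_2})$.

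I expect the two genuinely new points to be the main obstacles: the existence and positivity of the operator measures $\mu_i$ (the vector-valued upgrade of Richter's theorem), and the verification that the argument of \cite[Lemma 6.1(ii)]{SSS} survives the passage from scalars to $\mathcal L(\mathcal E)$-valued moments. The underlying reason a single measure $\mu_1$ suffices across all $T_2$-levels is the structural identity $I-T_1^*T_1=T_2^*(I-T_1^*T_1)T_2$ extracted from the toral $2$-isometry relation \eqref{Toral 2-isometry}, which iterates to $(T_2^*)^n(I-T_1^*T_1)T_2^n=I-T_1^*T_1$ and shows that the $T_1$-covariances on $\mathcal E$ are independent of the common $T_2$-exponent; verifying this cleanly in the operator-valued setting is where the care is needed.
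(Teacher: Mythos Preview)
Your proposal is correct and follows essentially the same route as the paper's proof. The vector-valued upgrade of Richter's theorem that you flag as a potential obstacle is precisely Olofsson's von Neumann--Wold decomposition for $2$-isometries \cite[Theorem 4.1]{Olofsson}, which the paper invokes directly to produce the operator measures $\mu_1,\mu_2$ (with $U_j|_{\mathcal E}=\mathrm{id}_{\mathcal E}$ read off from \cite[Equation 4.3]{Olofsson}); the remainder of your outline---joint wandering subspace via \cite[Theorem 4.3]{MRV}, the definition of $U$, and the case-by-case verification of the isometry identity through \cite[Lemma 6.1(ii)]{SSS}, Proposition \ref{inner product relation}, and Lemma \ref{inner product - monomials}---matches the paper almost step for step.
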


\begin{proof}
    Let $j\in\{1,2\}.$ Define $\mathcal{H}_j:=\bigvee_{n \geq 0}T_j^n(\mathcal E),$  which is a $T_j$-invariant subspace, where we choose to denote $\mathcal{E}:=\bigcap_{i=1}^2ker T_i^*$. Since $\boldsymbol{T}$ is left-inverse commuting, by \cite[Lemma 4.1] {MRV}, we obtain that the subspace $\mathcal{E}$ is a non-trivial closed subspace of $\mathcal{H}$. As $\boldsymbol{T}$ is   analytic toral $2$-isometry, it follows that the operator $T_j \vert_{\mathcal{H}_j}$ is an analytic $2$-isometry. Therefore by \cite[Theorem 4.1]{Olofsson} there exists spectral measure $E_j$ on $\mathbb T$ such that
     $T_j\vert_{\mathcal{H}_j}$ is unitarily equivalent to the multiplication by the co-ordinate function $M_z$ on $\mathcal{D}_{\mathcal{E}}(\mu_j)$ (for definition see \cite{Olofsson}), where for any Borel subset $\sigma$ of $\mathbb T,$  $\mu_j(\sigma) = PD_jP_{\mathcal{D}_j}E_j(\sigma)D_j\vert_{\mathcal{E}},$ 
     $P$ is the orthogonal projection of $\mathcal H$ onto $\mathcal{E},$ $D_j,$ $\mathcal D_j$ and $P_{\mathcal D_j}$ denote the defect operator, defect space, and orthogonal projection of $\mathcal H$ onto defect space corresponding to the operator $T_j$ respectively.
    That is, there exists a map $U_j: \mathcal{H}_j \rightarrow \mathcal{D}_{\mathcal{E}}(\mu_j)$ which is unitary and that  $U_jT_j=M_z U_j$. 
    In fact, as it is clear from \cite[Equation 4.3]{Olofsson} that ${U_j}_{\vert \mathcal E}=I_\mathcal E$.  This fact shall be used in what follows.
    Consider the Dirichlet-type space  $\mathcal{D}_{\mathcal{E}}({\mu_1}, {\mu_2})$, where ${\mu_j}$ for $j=1,2$, are defined as above, (see also \cite{MRV_2}).
 For each $j=1,2$, $T_j$ is analytic $2$-isometry. Hence by \cite [Theorem 1]{Richter2}, $T_j$ has the wandering subspace property. That is, we have
\begin{align*}
    \mathcal{H}=\bigvee_{n \geq 0} T_j^n(ker T_j^*), ~~\text{for each} ~~j=1,2.
\end{align*}
     Thus by \cite[Theorem 4.3]{MRV}, we get
\begin{align}\label{joint wandering subspace property}
    \mathcal{H}=\bigvee_{m,n \geq 0} T_1^m T_2^n \mathcal{E}.
\end{align}
    For any $x \in \mathcal{H}$ of the form $x = \sum_{k=0}^m \sum_{l=0}^n T_1^k T_2^la_{k,l}$ where $a_{k,l} \in \mathcal{E}$, we
    define the map $U: \mathcal{H} \rightarrow \mathcal{D}_{\mathcal{E}}({\mu_1},{\mu_2})$ by  $x \mapsto \sum_{k=0}^m \sum_{l=0}^n a_{k,l}z_1^k z_2^l$. By \eqref{joint wandering subspace property} and that $\mathcal{D}_{\mathcal{E}}({\mu_1},{\mu_2}) = \bigvee_{m,n \geq 0} a_{m,n}z_1^m z_2^n$, it suffices to show that for any $m,n,p,q \geq 0$ and $a_{m,n}, a_{p,q}\in \mathcal E,$ 
\begin{align}\label{isometry - U}
    \langle T_1^m T_2^n a_{m,n}, T_1^pT_2^q a_{p,q} \rangle = \langle a_{m,n} z_1^m z_2^n, a_{p,q} z_1^p z_2^q \rangle.
\end{align}
    The case when $m \neq p, n \neq q,$ \eqref{isometry - U} follows from Lemma \ref{inner product - monomials} easily, since the equality $\langle T_1^m T_2^n a_{m,n}, T_1^p T_2^q a_{p,q} \rangle =0$ can be deduced from \cite[Lemma 6.1(ii)]{SSS} and Proposition \ref{inner product relation}.
     Consider the case when $m = p, n \neq q$. Again using \cite [Lemma 6.1(ii)]{SSS}, we have 
     $$\langle T_1^m T_2^n a_{m,n}, T_1^m T_2^q a_{m,q}\rangle = \langle T_2^n a_{m,n}, T_2^q a_{m,q} \rangle.$$ 
     By repetitive applications of \cite [Lemma 6.1(ii)]{SSS}, Proposition \ref{inner product relation}, and Lemma \ref{inner product - monomials}, we get 
\begin{align*}
    \langle T_2^n a_{m,n,}, T_2^q a_{m,q} \rangle  &= \langle U_2T_2^n a_{m,n}, U_2T_2^q a_{m,q} \rangle_{\mathcal{D}_{\mathcal{E}}(\mu_2)} \\
    &= \langle M_z^nU_2 a_{m,n}, M_z^qU_2 a_{m,q} \rangle_{\mathcal{D}_{\mathcal{E}}(\mu_2)} \\
    &= \langle z^n a_{m,n}, z^q a_{m,q} \rangle_{\mathcal{D}_{\mathcal{E}}(\mu_2)} \\
    &= \min (n,q) \langle \hat{\mu}_2(q-n)a_{m,n}, a_{m,q} \rangle \\
    &= \min(n,q) \langle PT_2^{*(q-n)}(T_2^*T_2-I)\vert_{\mathcal{E}} ~a_{m,n}, a_{m,q} \rangle \\
    &= \langle z_2^n a_{m,n}, z_2^q a_{m,q} \rangle_{\mathcal{D}_{\mathcal{E}}({\mu_1},{\mu_2})}.
\end{align*}
    Similarly equation \eqref{isometry - U}  can be verified in the case when $m \neq p, n =q$. 
    For $m=p, n=q$, by \cite[Lemma 6.1(ii)] {SSS}, we have
\begin{align*}
    \|T_1^m T_2^n a_{m,n}\|^2 &= \|T_1^ma_{m,n}\|^2+\|T_2^na_{m,n}\|^2 - \|a_{m,n}\|^2 \\
    & = \|U_1T_1^m a_{m,n}\|^2 + \|U_2 T_2^n a_{m,n}\|^2 - \|U_1a_{m,n}\|^2 \\
    &= \|M_z^m U_1a_{m,n}\|^2 + \|M_z^n U_2a_{m,n} \|^2 - \|a_{m,n}\|^2 \\
    &= \|z^m a_{m,n}\|^2_{\mathcal{D}_{\mathcal{E}}({\mu_1})} + \|z^n a_{m,n}\|^2_{\mathcal{D}_{\mathcal{E}}({\mu_2})} - \|a_{m,n}\|^2_{\mathcal{D}_{\mathcal{E}}({\mu_1})} \\
    &= \|z_1^ma_{m,n}\|^2_{\mathcal{D}_{\mathcal{E}}({\mu_1}, {\mu_2})}+\|z_2^n a_{m,n}\|^2_{\mathcal{D}_{\mathcal{E}}({\mu_1},{\mu_2})}-\|a_{m,n}\|^2_{\mathcal{D}_{\mathcal{E}}({\mu_1},{\mu_2})} \\
    &= \|a_{m,n} z_1^m z_2^n \|^2_{{\mathcal{D}_{\mathcal{E}}({\mu_1},{\mu_2})}}. 
\end{align*}
This completes the proof.
\end{proof}
\begin{theorem}\label{Wold-type decomposition for pair- vector valued}
    Let $(T_1, T_2)$ be a pair of left-inverse commuting toral $2$-isometries. 
    Then there exist positive operator valued measures $\mu_1,\mu_2,\nu_1$ and $\nu_2$ on $\mathbb{T}$ such that
    \[\mathcal{H} \cong \mathcal{H}_{00} \oplus \mathcal{D}_{\mathcal{E}_{01}}(\mu_1) \oplus \mathcal{D}_{\mathcal{E}_{10}}(\mu_2) \oplus \mathcal{D}_{\mathcal{E}}(\nu_1,\nu_2) \]
    where $\mathcal{H}_{00}=\cap_{m,n \geq 0}T_1^m T_2^n \mathcal{H}$, $\mathcal{E}_{01}=\cap_{m \geq 0}T_1^m kerT_2^*$, $\mathcal{E}_{10}=\cap_{n \geq 0}T_2^n kerT_1^*$,   $\mathcal{E}=ker T_1^* \cap kerT_2^*$. Moreover $T_1$ is unitary on $\mathcal{H}_{00}$ and $\mathcal{D}_{\mathcal{E}_{01}}(\mu_1),$ and $T_2$ is unitary on $\mathcal{H}_{00}$  and $\mathcal{D}_{\mathcal{E}_{10}}.$ That is, we have the following decomposition 
\[ T_1 \cong
\begin{bmatrix}
    U_0 & 0 & 0 & 0 \\
    0 & U_1 & 0 & 0 \\
    0 & 0 & M_z & 0 \\
    0 & 0 & 0 & M_{z_1}
\end{bmatrix}
\text{  and  }  T_2 \cong
\begin{bmatrix}
    V_0 & 0 & 0 & 0 \\
    0 & M_z & 0 & 0 \\
    0 & 0 & V_1 & 0 \\
    0 & 0 & 0 & M_{z_2}
\end{bmatrix}
\]
where $U_0,V_0$ are unitary on $\mathcal{H}_{00}$ and $U_1,V_1$ are respective unitaries on $\mathcal{D}_{\mathcal{E}_{01}}(\mu_1), \mathcal{D}_{\mathcal{E}_{10}}(\mu_2)$.
\end{theorem}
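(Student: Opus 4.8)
The plan is to assemble the desired model from three ingredients already in hand: the four-fold joint Wold splitting of Theorem~\ref{wold-type decomposition for pair}, Olofsson's vector-valued Dirichlet model for a single analytic $2$-isometry, and the analytic pair model of Theorem~\ref{Model Theorem - Higher Rank}. First I would note that setting $i=j$ in the toral $2$-isometry identity \eqref{Toral 2-isometry} gives $I-2T_i^{*}T_i+T_i^{*2}T_i^{2}=0$, so each component $T_i$ is itself a $2$-isometry. In particular $T_i^{*}T_i\geq I$ is invertible, hence $T_i$ is left-invertible, and every $2$-isometry admits the Wold-type decomposition into its unitary part on $\mathcal H_\infty(T_i)$ and its analytic part (cf.\ \cite{Shimorin}). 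Since $(T_1,T_2)$ is left-inverse commuting by hypothesis, Theorem~\ref{wold-type decomposition for pair} applies and yields the orthogonal decomposition $\mathcal H=\mathcal H_{00}\oplus\mathcal H_{01}\oplus\mathcal H_{10}\oplus\mathcal H_{11}$ into subspaces that reduce both $T_1$ and $T_2$, with $T_1$ unitary on $\mathcal H_{00}$ and $\mathcal H_{01}$, and $T_2$ unitary on $\mathcal H_{00}$ and $\mathcal H_{10}$.

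Next I would model each summand separately. On $\mathcal H_{00}$ both restrictions are unitary, giving the operators $U_0$ and $V_0$ with no further analysis needed. On $\mathcal H_{01}=\bigvee_{n\geq0}T_2^{n}(\mathcal E_{01})$, the restriction $T_2\vert_{\mathcal H_{01}}$ is an analytic $2$-isometry: its wandering subspace is $\mathcal H_{01}\cap\ker T_2^{*}$, which equals $\mathcal E_{01}=\bigcap_{m\geq0}T_1^{m}\ker T_2^{*}$ because $\ker T_2^{*}$ reduces $T_1$ (Proposition~\ref{proposition_1}) and $\mathcal H_{01}\subseteq\bigcap_{m\geq0}T_1^{m}\mathcal H$. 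Olofsson's theorem \cite[Theorem~4.1]{Olofsson} then furnishes a positive $\mathcal L(\mathcal E_{01})$-valued measure $\mu_1$ on $\mathbb T$ and a unitary $\mathcal H_{01}\cong\mathcal D_{\mathcal E_{01}}(\mu_1)$ intertwining $T_2\vert_{\mathcal H_{01}}$ with $M_z$; under this identification the commuting unitary $T_1\vert_{\mathcal H_{01}}$ is carried to a unitary $U_1$. The summand $\mathcal H_{10}$ is handled symmetrically, producing $\mu_2$, the model $\mathcal D_{\mathcal E_{10}}(\mu_2)$ with $T_1\cong M_z$, and the commuting unitary $V_1$ for $T_2$.

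Finally, on $\mathcal H_{11}=\bigvee_{m,n\geq0}T_1^{m}T_2^{n}\mathcal E$ with $\mathcal E=\ker T_1^{*}\cap\ker T_2^{*}$, both $T_1$ and $T_2$ restrict to analytic operators, since $\mathcal H_{11}\subseteq\mathcal W_{T_i}(\ker T_i^{*})$ and $T_i$ is analytic on $\mathcal W_{T_i}(\ker T_i^{*})$; moreover the restricted pair remains left-inverse commuting and toral $2$-isometric because $\mathcal H_{11}$ reduces both operators and the defining identities pass to reducing subspaces. Hence Theorem~\ref{Model Theorem - Higher Rank} supplies positive $\mathcal L(\mathcal E)$-valued measures $\nu_1,\nu_2$ and a unitary $\mathcal H_{11}\cong\mathcal D_{\mathcal E}(\nu_1,\nu_2)$ carrying $(T_1,T_2)$ to $(M_{z_1},M_{z_2})$. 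Collecting the four unitaries gives the asserted block-diagonal forms of $T_1$ and $T_2$. The routine checks---that restrictions to a jointly reducing subspace preserve the toral $2$-isometry and left-inverse commuting relations---are immediate; the step demanding the most care is the precise identification of the generating wandering subspaces $\mathcal E_{01}$, $\mathcal E_{10}$, $\mathcal E$ together with the analyticity of the relevant restrictions, so that the hypotheses of \cite[Theorem~4.1]{Olofsson} and of Theorem~\ref{Model Theorem - Higher Rank} are met with exactly the stated coefficient spaces.
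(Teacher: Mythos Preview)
Your proposal is correct and follows essentially the same route as the paper. The paper's own proof is extremely terse: it observes that each $T_i$ is a $2$-isometry and hence admits a Wold-type decomposition by \cite[Theorem~3.6]{Shimorin}, then simply says that ``an application of Theorem~\ref{wold-type decomposition for pair} completes the proof,'' leaving the identification of the Dirichlet models on the summands $\mathcal H_{01}$, $\mathcal H_{10}$, $\mathcal H_{11}$ (via Olofsson and Theorem~\ref{Model Theorem - Higher Rank}) entirely implicit. You have spelled out exactly those steps that the paper suppresses, including the verification that the wandering subspaces of the restrictions coincide with the stated $\mathcal E_{01}$, $\mathcal E_{10}$, $\mathcal E$, so your argument is a faithful and more detailed version of theirs.
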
    
   \begin{proof}
   As for each $i=1,2,$ the operator $T_i$ is a $2$-isometry, therefore it admits a Wold-type decomposition by  \cite [Theorem 3.6] {Shimorin}. Now, an application of Theorem \ref{wold-type decomposition for pair} completes the proof of the theorem.
\end{proof}

\noindent\textbf{Acknowledgment:}
The first named author's research is supported by the DST-INSPIRE Faculty Grant with Fellowship No. DST/INSPIRE/04/2020/001250, and the third named author's research work is supported by the BITS Pilani institute fellowship.

\end{document}